 \newtheorem{thm}{Theorem}[section]
 \newtheorem{cor}[thm]{Corollary}
 \newtheorem{prop}[thm]{Proposition}
 \newtheorem{defn}[thm]{Definition}
 \newtheorem{rem}[thm]{Remark}
 \numberwithin{equation}{section}
\begin{document}

\title[]
 {Null $\varphi $--Slant Curves in a Main Class of\\3-Dimensional Normal Almost 
 Contact \\ B-Metric Manifolds}

\author[G. Nakova]{Galia Nakova}
\address{
University of Veliko Tarnovo "St. Cyril and St. Methodius" \\ Faculty of Mathematics and Informatics
\\ 2 Teodosii Tarnovski Str., Veliko Tarnovo 5003, Bulgaria}
\email{gnakova@gmail.com}
\keywords{Almost contact B-metric manifolds, Slant curves, Null curves, $\varphi $-slant null curves}


\subjclass{53C15, 53C50}


\begin{abstract}
We introduce a new type of slant curves in almost contact B-metric manifolds, called 
$\varphi $-slant curves,  by an additional condition which is specific for these manifolds.
In this paper we study $\varphi $-slant null curves in a class of 3-dimensional normal almost contact B-metric manifolds and prove that for non-geodesic of them there exists a unique Frenet frame for which the original parameter is distinguished. We investigate some of $\varphi $-slant null  curves and with respect to the associated B-metric on the manifold and find relationships between the corresponding Frenet frames and curvatures. We construct the examined curves  in a 3-dimensional Lie group  and give their matrix representation.
\end{abstract}

\newcommand{\ie}{i.\,e. }
\newcommand{\g}{\mathfrak{g}}
\newcommand{\D}{\mathcal{D}}
\newcommand{\F}{\mathcal{F}}
\newcommand{\diag}{\mathrm{diag}}
\newcommand{\End}{\mathrm{End}}
\newcommand{\im}{\mathrm{Im}}
\newcommand{\id}{\mathrm{id}}
\newcommand{\Hom}{\mathrm{Hom}}

\newcommand{\Rad}{\mathrm{Rad}}
\newcommand{\rank}{\mathrm{rank}}
\newcommand{\const}{\mathrm{const}}
\newcommand{\tr}{{\rm tr}}
\newcommand{\ltr}{\mathrm{ltr}}
\newcommand{\codim}{\mathrm{codim}}
\newcommand{\Ker}{\mathrm{Ker}}
\newcommand{\R}{\mathbb{R}}
\newcommand{\K}{\mathbb{K}}

\newcommand{\thmref}[1]{Theorem~\ref{#1}}
\newcommand{\propref}[1]{Proposition~\ref{#1}}
\newcommand{\corref}[1]{Corollary~\ref{#1}}
\newcommand{\secref}[1]{\S\ref{#1}}
\newcommand{\lemref}[1]{Lemma~\ref{#1}}
\newcommand{\dfnref}[1]{Definition~\ref{#1}}


\newcommand{\ee}{\end{equation}}
\newcommand{\be}[1]{\begin{equation}\label{#1}}

\maketitle

\section{Introduction}\label{sec-1}
In the Lorentzian geometry there exist three types of curves according to the causal character of their tangent vector -- spacelike, timelike and null (lightlike) curves. Studying the geometry of null curves is of special interest since they have very different properties compared to spacelike and timelike curves. 
The general theory of null curves is developed in \cite{D-B, D-J},
where there are established important applications of these curves in general relativity. 

Let $\bf{F}$ be a Frenet frame along a null curve $C$ on a Lorentzian manifold. According to \cite{D-B}, $\bf{F}$ and the Frenet equations with respect to $\bf{F}$ depend on both the parametrization of $C$ and the choice of a screen vector bundle. However, if a non-geodesic
null curve $C$ is  properly parameterized, then there exists only one Frenet frame, called a Cartan Frenet frame, for which
the corresponding Frenet equations of $C$, called Cartan Frenet equations, have minimum number of curvature functions (\cite{D-J}).

In this paper we consider 3-dimensional almost contact B-metric manifolds $(M,\varphi,\xi,\eta,g)$, which are Lorentzian manifolds equipped with an almost contact B-metric structure. We study $\varphi $-slant null curves in considered manifolds belonging to the class $\F_4$ of the Ganchev-Mihova-Gribachev classification given in \cite{GaMGri}. The class $\F_4$ consists of normal almost contact B-metric manifolds and it is analogous to the class of $\alpha $-Sasakian manifolds in the theory of almost contact metric manifolds.

A slant curve $C(t)$ on $(M,\varphi,\xi,\eta,g)$, defined by the
condition $g(\dot C,\xi )=a={\rm const}$ for the tangent vector $\dot C$, is a natural generalization of a cylindrical helix in an Euclidean space. Slant curves and in particular Legendre curves (which are slant curves with $a=0$) in almost contact metric and almost paracontact metric manifolds have been investigated  intensively by many  authors \cite{I-L, W} and the references therein. 

In the present work we introduce a new type of slant curves in almost contact B-metric manifolds, called 
$\varphi $-slant curves,  by the additional condition $g(\dot C,\varphi \dot C)=b={\rm const}$. For these manifolds, in contrast to the almost contact metric and almost paracontact metric manifolds, $b$ is  a non-zero function in general. 

The paper is organized as follows.
Section 2 is a brief review of almost contact B-metric manifolds and geometry of null curves in a 3-dimensional Lorentzian manifold.
First in Section 3 we show that in a 3-dimensional almost contact B-metric manifold there exist no 
$\varphi $-slant null curves such that $a=b=0$. Then we
prove that for a non-geodesic $\varphi $-slant null curve $C(t)$ in a 3-dimensional 
$\F_4$-manifold there exists a unique Frenet frame ${\bf F}_1$ for which the original parameter $t$ is distinguished, as well as we express ${\bf F}_1$ in terms of the almost contact B-metric structure. Also, we find the curvatures $k_1(t)$ and $k_2(t)$ with respect to ${\bf F}_1$. On an almost contact B-metric manifold there exist two B-metrics $g$ and $\widetilde g$. For that reason in Section 4 we consider some
$\varphi $-slant null curves with respect to $g$  in a 3-dimensional $\F_4$-manifold and prove that with respect to $\widetilde g$ these curves are $\varphi $-slant non-null curves. Moreover, we obtain  relationships between the Frenet frames and the corresponding curvatures with respect to  $g$ and 
$\widetilde g$.
In the last Section 4 we construct $\varphi $-slant null curves in a 3-dimensional Lie group endowed with an almost contact B-metric structure of an $\F_4$-manifold. We find a matrix representation of considered curves.

\section{Preliminaries}\label{sec-2}
A $(2n+1)$-dimensional smooth manifold  $(M,\varphi,\xi ,\eta ,g)$ is called an almost contact manifold with B-metric  (or {\it an almost contact B-metric manifold}) \cite{GaMGri} if it is endowed with an almost contact structure $(\varphi ,\xi ,\eta )$ consisting of an endomorphism $\varphi $ of the tangent bundle, a Reeb vector field $\xi $  and its dual 1-form $\eta $, satisfying the following relations:
\begin{align*}
\varphi^2X=-X+\eta(X)\xi, \qquad \quad \eta(\xi )=1.
\end{align*}
Also, $M$ is equipped with a semi-Riemannian metric $g$, called {\it a B-metric} \cite{GaMGri}, determined by
\[
g(\varphi X,\varphi Y)=-g(X,Y)+\eta (X)\eta (Y).
\]
Here and further $X$, $Y$, $Z$ are tangent vector fields on $M$, \ie $X,Y,Z \in TM$.
Immediate consequences of the above conditions are:
\begin{align*}
\eta \circ \varphi =0, \quad \varphi \xi =0, \quad {\rm rank}(\varphi)=2n, \quad \eta (X)=
g(X,\xi ), \quad g(\xi,\xi )=1.
\end{align*}
The distribution $\mathbb {D}: x \in M \longrightarrow \mathbb {D}_x\subset T_xM$, where
\[
\mathbb D_x=Ker \eta=\{X_x\in T_xM: \eta (X_x)=0\}
\]
is called {\it a contact distribution} generated by 
$\eta $. Then the tangent space $T_xM$ at each $x\in M$ is the following orthogonal direct sum
\[
T_xM=\mathbb D_x\oplus span_\mathbb R\{\xi _x\} .
\]
The tensor field $\varphi $ induces an almost complex structure on each
fibre on $\mathbb D$. Since $g$ is non-degenerate metric on $M$ and $\xi $ is non-isotropic,
the contact distribution $\mathbb D$ is non-degenerate and the restriction $g_{\vert \mathbb D}$ of the metric $g$ on $\mathbb D$ is of signature $(n,n)$.
\\
The tensor field ${\widetilde g}$ of type $(0,2)$ given by 
${\widetilde g}(X,Y)=g(X,\varphi Y)+\eta (X)\eta (Y)$
is a B-metric, called {\it an associated metric} to $g$. Both metrics $g$ and 
${\widetilde g}$ are necessarily of signature $(n+1,n)$ 
$(+\ldots + -\ldots -)$.
\\
Let $\nabla$ be the Levi-Civita connection of $g$. 
The tensor field $F$ of type $(0,3)$ on $M$ is defined by
$F(X,Y,Z)=g((\nabla_X\varphi)Y,Z)$ 
and it has the following properties:
\[
F(X,Y,Z)=F(X,Z,Y)=F(X,\varphi Y,\varphi Z)+\eta (Y)F(X,\xi,Z)+\eta (Z)F(X,Y,\xi ).
\]
Moreover, we have
\begin{align}\label{2.1}
F(X,\varphi Y,\xi )=(\nabla _X\eta )Y=g(\nabla _X\xi,Y).
\end{align}
The following 1-forms, called \emph{Lee forms}, are associated with $F$:
\[
\theta (X)=g^{ij}F(e_i,e_j,X), \quad \theta ^*(X)=g^{ij}F(e_i,\varphi e_j,X), \quad
\omega (X)=F(\xi,\xi,X),
\]
where $\{e_i\}, \, i=\{1,\ldots,2n+1\}$ is a basis of $T_xM$, $x\in M$, and $(g^{ij})$ is the inverse matrix of $(g_{ij})$.
\\
A classification of the almost contact B-metric manifolds with respect to $F$ is given in \cite{GaMGri} and
eleven basic classes  $\F_i$ $(i=1,2,\dots,11)$ are obtained. If $(M,\varphi,\xi,\eta,g)$ belongs to the class $\F_i$ then it is called an \emph{$\F_i$-manifold}.
\\
The special class $\F_0$ is the intersection of all basic classes. It is known as the class of the {\it cosymplectic B-metric manifolds}, \ie the class of the considered manifolds with parallel structure tensors with respect to $\nabla$, namely
$\nabla \varphi =\nabla \xi =\nabla \eta =\nabla g=\nabla {\widetilde g}=0$ and consequently $F=0$.
\\
The lowest possible dimension of the considered manifolds is three. The class of the 3-dimensional almost contact B-metric manifolds is
$\F_1\oplus \F_4\oplus \F_5\oplus \F_8\oplus \F_9\oplus \F_{10}\oplus \F_{11}$ \cite{GaMGri}. According to \cite{ManIv13}, the class of the normal almost contact B-metric manifolds is $\F_1\oplus\F_2\oplus\F_4\oplus\F_5\oplus\F_6$, since the Nijenhuis tensor of almost contact structure vanishes there. Hence, the class of the 3-dimensional normal almost contact B-metric manifolds is $\F_1\oplus\F_4\oplus\F_5$. 
\\
Let us remark that only in the classes $\F_1$, $\F_4$, $\F_5$ and $\F_{11}$ the structure tensor $F$ is expressed explicitly by the 1-forms $\theta $, $\theta ^*$, $\omega $ and the basic tensors of type $(0,2)$ $g$, $\widetilde g$, $\eta \otimes \eta $ of the manifold. In this reason, the classes $\F_1$, $\F_4$, $\F_5$, $\F_{11}$ are called {\it main classes}.

In this paper we consider 3-dimensional almost contact B-metric manifolds $(M,\varphi,\xi ,\eta ,g)$ belonging to the class $\F_4$, which is determined by (see \cite{GaMGri})
\begin{align}\label{2.2}
\begin{array}{ll}
\F_4 : F(X,Y,Z)=-\frac{\theta (\xi )}{2}\{g(\varphi X,\varphi Y)\eta (Z)+g(\varphi X,\varphi Z)\eta (Y)\}. 
\end{array}
\end{align}
Taking into account \eqref{2.1} and \eqref{2.2} we have 
\begin{align}\label{2.3}
  \nabla _X\xi= \frac{\theta (\xi )}{2}\varphi X .
\end{align}
The equality \eqref{2.3} shows that the class $\F_4$ is similar to the class of $\alpha $-Sasakian manifolds in the theory of almost contact metric manifolds.\\
Let ${\widetilde \nabla }$ be the Levi-Civita connection of ${\widetilde g}$. We consider the symmetric tensor field $\Phi $ of type $(1,2)$ defined by
$\Phi (X,Y)={\widetilde \nabla }_XY-\nabla _XY$. For a 3-dimensional $\F_4$-manifold the following equality holds \cite{MM}: 
\begin{align}\label{2.4}
{\widetilde \nabla }_XY-\nabla _XY=\frac{\theta (\xi )}{2}\{g(X,\varphi Y)-g(\varphi X,\varphi Y)\}\xi .
\end{align}
Let us remark that on a 3-dimensional almost contact B-metric manifold $(M,\varphi,\xi,\eta,g)$ the metric $g$ has signature $(2,1)$, \ie $(M,g)$ is a 3-dimensional Lorentzian manifold. In the remainder of this section we briefly recall the main notions about null curves in a 3-dimensional Lorentzian manifold $M$ for which we refer to \cite{D-B, D-J}.

Let 
$C: I\longrightarrow M$ be a smooth curve in $M$ given locally by
\[
x_i=x_i(t), \quad t\in I\subseteq {\R}, \quad i\in \{1,2,3\}
\]
for a coordinate neighborhood $U$ of $C$. The tangent vector field is given by
\[
\frac{{\rm d}}{{\rm d}t}=(\dot {x}_1, \dot {x}_2, \dot {x}_3)=\dot {C},
\]
where we denote $\frac{{\rm d}x_i}{{\rm d}t}$ by $\dot {x}_i$ for $i\in \{1,2,3\}$. The curve $C$ is called a {\it regular curve} if $\dot {C}\neq 0$ holds everywhere.

Let a regular curve $C$ be {\it a null (lightlike) curve} in $(M, g)$, \ie at each point $x$ of $C$ we have
\begin{align}\label{2'}
g(\dot {C},\dot {C})=0,\qquad \dot {C}\neq 0.
\end{align}
%
A general Frenet frame on $M$ along $C$ is denoted by ${\bf F}=\{\dot {C}, N, W\}$ and the vector fields in ${\bf F}$ are determined by
\begin{align}\label{3'}
g(\dot {C},N)=g(W,W)=1, \quad g(N,N)=g(N,W)=g(\dot {C},W)=0.
\end{align}
In \cite[Theorem 1.1, p. 53]{D-B} it was proved that for a given $W$ there exists a unique $N$ satisfying
the corresponding equalities in \eqref{3'}.
The following general Frenet equations with respect to ${\bf F}$ and $\nabla $ of $(M, g)$ are known from \cite{D-B, D-J} %
\begin{align}\label{general Frenet eq}
\begin{array}{lll}
\nabla _{\dot {C}}\dot {C}=h\dot {C}+k_1W \\
\nabla _{\dot {C}}N=-hN+k_2W \\
\nabla _{\dot {C}}W=-k_2\dot {C}-k_1N,
\end{array}
\end{align}
where 
$h$, $k_1$ and  $k_2$
are smooth functions on $U$. The functions $k_1$ and  $k_2$ are called {\it curvature functions} of $C$.

The general Frenet frame ${\bf F}$ and its general Frenet equations \eqref{general Frenet eq} are not unique as they depend on the parameter and the choice of the
screen vector bundle $S(TC^\bot )={\rm span}W$ of $C$ (for details see \cite[pp. 56-58]{D-B}, \cite[pp. 25-29]{D-J}). It is known \cite[p. 58]{D-B} that there exists a
parameter $p$ called a {\it distinguished parameter}, for which the function $h$ vanishes in \eqref{general Frenet eq}. The pair $(C(p), {\bf F})$, where ${\bf F}$ is a Frenet frame along $C$ with respect to a distinguished parameter $p$, is called a {\it framed null curve} (see \cite{D-J}). In general, $(C(p), {\bf F})$ is not unique since it depends on both $p$ and the screen distribution. A Frenet frame with the minimum number of curvature functions is called {\it Cartan Frenet frame} of a null curve $C$. In \cite{D-J} it is proved that if the null curve $C(p)$ is non-geodesic
such that  for $\ddot{C}=\frac{{\rm d}}{{\rm d}p}\dot{C}$ the condition $g(\ddot {C},\ddot {C})=k_1=1$ holds, then there exists only one Cartan Frenet frame
${\bf F}$ 
with the following Frenet equations
\begin{align}\label{Cartan Frenet eq}
\begin{array}{lll}
\nabla _{\dot {C}}\dot {C}=W, \quad
\nabla _{\dot {C}}N=\tau W, \quad
\nabla _{\dot {C}}W=-\tau \dot {C}-N.
\end{array}
\end{align}
The latter equations are called the {\it Cartan Frenet equations} of $C(p)$ whereas $\tau $ is called a \emph{torsion function} and it is invariant upto a sign under Lorentzian transformations. A null curve together with its Cartan Frenet frame is called a {\it Cartan framed null curve}. Note that some authors \cite{H-I} term a framed null curve  $(C(p), {\bf F})$ Cartan framed null curve and a Frenet frame 
${\bf F}$ along $C$ with respect to a distinguished parameter $p$ -- Cartan Frenet frame.  

\section{Framed $\varphi $-slant null curves with respect to the original parameter in 3-dimensional
${\F}_4$-manifolds}\label{sec-3}
Let us consider a smooth curve $C$ 
in an almost contact B-metric manifold $(M,\varphi,\xi,\eta,g)$. We say
that $C$ is a {\it slant curve} on $M$ if $g(\dot {C},\xi )=\eta (\dot {C})=a$ and $a$ is a real constant. The curve $C$ is called a {\it Legendre curve} if $a=0$.\\
A distinguishing feature of the almost contact B-metric manifolds from the almost contact metric and almost paracontact metric manifolds is that $g(X,\varphi X)$ is not zero in general. Motivated by this fact we define a new type slant curves.
\begin{defn}\label{Definition 3.1}
We say that a smooth curve $C(t)$ in an almost contact B-metric manifold $(M,\varphi,\xi,\eta,g)$ is {\it 
$\varphi $-slant} if 
\begin{align}\label{3.1}
g(\dot {C}(t),\xi )=\eta (\dot {C}(t))=a \quad \text{and}\quad g(\dot {C}(t),\varphi \dot {C}(t))=b,
\end{align}
 where $a$ and $b$ are real constants.
\end{defn}
\begin{rem}\label{Remark 3.1}
Let $C(t)$ be a slant or a $\varphi $-slant curve. 
If we change the parameter $t$ of $C(t)$ with another parameter $p$, then we have $\dot C(p)=\dot C(t)\frac{{\rm d}t}{{\rm d}p}$. Hence \eqref{3.1} becomes
\begin{align*}
\begin{array}{ll}
g(\dot {C}(p),\xi )=\eta (\dot {C}(p))=\frac{{\rm d}t}{{\rm d}p}\eta (\dot {C}(t))=\frac{{\rm d}t}{{\rm d}p}a \quad \text{and} \\ \\
g(\dot {C}(p),\varphi \dot {C}(p))={\left(\frac{{\rm d}t}{{\rm d}p}\right)}^2g(\dot {C}(t),\varphi \dot {C}(t))={\left(\frac{{\rm d}t}{{\rm d}p}\right)}^2b .
\end{array}
\end{align*}
Therefore $g(\dot C(p),\xi )$ and $g(\dot {C}(p),\varphi \dot {C}(p))$ are constant
if and only if $t=\alpha p+\beta $, where $\alpha, \, \beta $ are constant, i.e in general slant and $\varphi $-slant curves are not invariant under a reparameterization.
Our aim in the present paper is to study $\varphi $-slant null curves with respect to its original parameter.
\end{rem}

\begin{prop}\label{Proposition 3.2}
In a 3-dimensional almost contact B-metric manifold \\ $(M,\varphi,\xi,\eta,g)$ there exist no $\varphi $-slant null curves such that $a=b=0$.
\end{prop}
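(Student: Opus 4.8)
The plan is to argue by contradiction: assume $C(t)$ is a $\varphi$-slant null curve in a 3-dimensional almost contact B-metric manifold with $a = b = 0$, and derive a contradiction with the regularity condition $\dot C \neq 0$ in \eqref{2'}. The key is to exploit the algebraic structure of the B-metric on a 3-dimensional manifold, decomposing the tangent vector $\dot C$ with respect to the splitting $T_xM = \mathbb{D}_x \oplus \mathrm{span}_{\mathbb R}\{\xi_x\}$.

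First I would write $\dot C = U + \eta(\dot C)\xi$ where $U \in \mathbb{D}$, so that $a = \eta(\dot C) = 0$ forces $\dot C = U \in \mathbb{D}$; hence $\varphi^2 \dot C = -\dot C$ and $\varphi \dot C \in \mathbb{D}$ as well. Next I would compute the three relevant inner products in terms of $\dot C$ alone. The null condition gives $g(\dot C, \dot C) = 0$. The second $\varphi$-slant condition with $b = 0$ gives $g(\dot C, \varphi \dot C) = 0$. Finally, using the B-metric identity $g(\varphi X, \varphi Y) = -g(X,Y) + \eta(X)\eta(Y)$ with $X = Y = \dot C$ and $\eta(\dot C) = 0$, one gets $g(\varphi \dot C, \varphi \dot C) = -g(\dot C, \dot C) = 0$. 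So $\dot C$ and $\varphi \dot C$ span a subspace of $\mathbb{D}_x$ on which $g$ restricts to the zero form. The main point is then that $\mathbb{D}_x$ is a $2$-dimensional non-degenerate subspace (signature $(1,1)$ on a 3-manifold), so it contains no totally isotropic plane; a totally isotropic subspace of a non-degenerate space of signature $(1,1)$ has dimension at most $1$. Therefore $\{\dot C, \varphi \dot C\}$ must be linearly dependent in $\mathbb{D}_x$.

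To finish, I would show that linear dependence of $\dot C$ and $\varphi \dot C$ together with $\dot C \in \mathbb{D}$ forces $\dot C = 0$. Suppose $\varphi \dot C = \lambda \dot C$ for some scalar $\lambda$ (the case $\dot C = 0$ being immediate). Applying $\varphi$ and using $\varphi^2 \dot C = -\dot C$ gives $-\dot C = \lambda^2 \dot C$, i.e. $(\lambda^2 + 1)\dot C = 0$, which is impossible over $\mathbb R$ unless $\dot C = 0$. Either way $\dot C = 0$, contradicting $\dot C \neq 0$. Hence no such curve exists.

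The step I expect to be the main obstacle is making precise and clean the claim that $\mathbb{D}_x$ with $g_{|\mathbb{D}_x}$ of signature $(1,1)$ admits no $2$-dimensional totally isotropic subspace; this is the essential use of the low dimension and the specific signature, and one should phrase it carefully (e.g. via the fact that in a non-degenerate bilinear form the dimension of a totally isotropic subspace is bounded by the Witt index, which is $1$ here). Everything else is a routine manipulation of the defining identities. An alternative, more computational route avoiding the Witt-index language would be to pick an explicit $g$-orthonormal-type basis of $\mathbb{D}_x$ adapted to $\varphi$, write $\dot C$ in coordinates, and check directly that the two conditions $g(\dot C,\dot C) = 0$ and $g(\dot C,\varphi\dot C) = 0$ force the coordinates to vanish; this may in fact be the cleanest presentation for a 3-dimensional manifold.
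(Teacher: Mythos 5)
Your proposal is correct and rests on exactly the same obstruction as the paper's proof: with $a=b=0$ the vectors $\dot C$ and $\varphi\dot C$ lie in $\mathbb{D}$ and span a totally isotropic subspace, which is incompatible with $g_{\vert\mathbb{D}}$ having signature $(1,1)$. The only organizational difference is that the paper asserts linear independence of $\dot C$ and $\varphi\dot C$ and concludes $g$ vanishes on all of $\mathbb{D}$, whereas you run the argument the other way (Witt index forces dependence, then $\varphi^2=-\mathrm{id}$ on $\mathbb{D}$ forces $\dot C=0$), which has the minor merit of making explicit the independence step the paper leaves unjustified.
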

\begin{proof} Let us assume that there exists a $\varphi $-slant null curve $C$ in $M$ such that $a=b=0$.
Then we have $g(\varphi \dot {C},\varphi \dot {C})=-g(\dot {C}, \dot {C})+
\eta (\dot {C})\eta (\dot {C})=0$. From $\eta (\dot {C})=\eta (\varphi \dot {C})=0$ it follows that $\dot {C}$ and $\varphi \dot {C}$ belong to the contact distribution $\mathbb {D}$ of $M$ along $C$. Since 
$\dot {C}$ and $\varphi \dot {C}$ are linearly independent, they form a basis of $\mathbb {D}$ at each point $x$ of $C$. Hence, for an arbitrary vector field $X \in \mathbb {D}_{\vert C}$ we have $X=u\dot {C}+v\varphi \dot {C}$ for some functions $u$ and $v$. By using $g(\dot {C},\dot {C})=g(\varphi \dot {C},\varphi \dot {C})=0$ and the second equality in \eqref{3.1} we obtain $g(X,X)=0$. The last implies a contradiction since $g_{\vert \mathbb D}$ is of signature $(1,1)$, which confirms our assertion.
\end{proof}
Now, taking into account Proposition \ref{Proposition 3.2}, it is easy to see that the triad of vector fields
$\{\dot {C}, \xi, \varphi \dot {C}\}$ is a basis of $T_xM$ at each point $x$ of a $\varphi $-slant null curve in a 3-dimensional almost contact B-metric manifold $(M,\varphi,\xi,\eta,g)$. By using this basis, 
in \cite{M-N} H. Manev and the author of this paper obtained the following result for a slant null curve $C$ in $(M,\varphi,\xi,\eta,g)$ satisfying the conditions \eqref{3.1}, where $(a,b)\neq (0,0)$ and b is a function:\\
If ${\bf F}=\{\dot {C}, N, W\}$ is a general Frenet frame on $M$ along $C$ which has the same positive orientation as a basis $\{\dot {C}, \xi, \varphi \dot {C}\}$ at
each $x\in C$, then
\begin{align}\label{3.21}
W=\alpha \xi +\beta \dot {C}+\gamma \varphi \dot {C} , 
\end{align}
\begin{align}\label{3.22}
N=\lambda \xi+\mu \dot {C}+\nu \varphi \dot {C},
\end{align}
where $\beta $ is an arbitrary function and $\alpha, \gamma, \lambda, \mu, \nu $ are the following functions
\begin{align}\label{3.3}
\begin{split}
\alpha ={-\frac{b}{\sqrt{a^4+b^2}}},\qquad
\gamma ={\frac{a}{\sqrt{a^4+b^2}}},
\end{split}
\end{align}
\begin{align}\label{3.4}
\begin{split}
\lambda &=\frac{a^3+\beta b\sqrt{a^4+b^2}}
{a^4+b^2}, \qquad
\mu =-\frac{a^2+\beta ^2\left(a^4+b^2\right)}{2\left(a^4+b^2\right)}, 
\\[4pt]
\nu &=\frac{b-\beta a
\sqrt{a^4+b^2}}{a^4+b^2}.
\end{split}
\end{align}
Moreover, the functions $h$ and $k_1$ in \eqref{general Frenet eq}
with respect to ${\bf F}$ are given by
\begin{align}\label{3.5}
\begin{array}{ll}
h=-\lambda g(\dot {C},\nabla _{\dot {C}}\xi )+\frac{\nu }{2}\left[\dot {C}\left(b\right)-F(\dot {C},\dot {C},\dot {C})\right],\\ \\
k_1=-\alpha g(\dot {C},\nabla _{\dot {C}}\xi )+\frac{\gamma }{2}\left[\dot {C}\left(b\right)-F(\dot {C},\dot {C},\dot {C})\right].
\end{array}
\end{align}
\begin{rem}\label{Remark 3.2}
The equalities \eqref{3.21}, \eqref{3.22}, \eqref{3.3}, \eqref{3.4} and \eqref{3.5} hold also in case of a 
$\varphi $-slant null curve in a 3-dimensional $(M,\varphi,\xi,\eta,g)$, i.e. when b is a constant.
\end{rem}
\begin{prop}\label{Proposition 3.3}
Let $C$ be a $\varphi $-slant null curve in a 3-dimensional $\F_4$-manifold $(M,\varphi,\xi,\eta,g)$. If 
${\bf F}=\{\dot {C}, N, W\}$ is a general Frenet frame along $C$, then for the functions $k_1$ and $h$ in \eqref{general Frenet eq} we have
\begin{align}\label{3.61}
h=-\beta \frac{\theta (\xi )\sqrt{a^4+b^2}}{2} ,
\end{align}
\begin{align}\label{3.62}
k_1=\frac{\theta (\xi )\sqrt{a^4+b^2}}{2}.
\end{align}
\end{prop}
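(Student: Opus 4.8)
The plan is to substitute the structural information of an $\F_4$-manifold into the general formulas \eqref{3.5}, which are available by \remref{Remark 3.2} for a $\varphi$-slant null curve. The two ingredients in \eqref{3.5} that depend on the manifold are the quantity $g(\dot C,\nabla_{\dot C}\xi)$ and the combination $\dot C(b)-F(\dot C,\dot C,\dot C)$. First I would compute $\nabla_{\dot C}\xi$ from \eqref{2.3}, which gives $\nabla_{\dot C}\xi=\frac{\theta(\xi)}{2}\varphi\dot C$, so that $g(\dot C,\nabla_{\dot C}\xi)=\frac{\theta(\xi)}{2}g(\dot C,\varphi\dot C)=\frac{\theta(\xi)}{2}b$, using the second defining condition in \eqref{3.1}.

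Next I would handle $\dot C(b)-F(\dot C,\dot C,\dot C)$. Since $C$ is a $\varphi$-slant null curve, $b$ is a constant, so $\dot C(b)=0$. For $F(\dot C,\dot C,\dot C)$ I would read off from the $\F_4$ defining relation \eqref{2.2} with $X=Y=Z=\dot C$: this yields $F(\dot C,\dot C,\dot C)=-\frac{\theta(\xi)}{2}\bigl\{g(\varphi\dot C,\varphi\dot C)\eta(\dot C)+g(\varphi\dot C,\varphi\dot C)\eta(\dot C)\bigr\}=-\theta(\xi)\,g(\varphi\dot C,\varphi\dot C)\,\eta(\dot C)$. Now using $g(\varphi\dot C,\varphi\dot C)=-g(\dot C,\dot C)+\eta(\dot C)^2=a^2$ (because $C$ is null, $g(\dot C,\dot C)=0$, and $\eta(\dot C)=a$), and $\eta(\dot C)=a$, we get $F(\dot C,\dot C,\dot C)=-a^3\theta(\xi)$. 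Hence $\dot C(b)-F(\dot C,\dot C,\dot C)=a^3\theta(\xi)$.

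Finally I would plug these two evaluations back into \eqref{3.5}. For $h$ we obtain
\begin{align*}
h=-\lambda\cdot\frac{\theta(\xi)}{2}b+\frac{\nu}{2}\cdot a^3\theta(\xi)
=\frac{\theta(\xi)}{2}\bigl(-\lambda b+\nu a^3\bigr),
\end{align*}
and substituting the expressions for $\lambda$ and $\nu$ from \eqref{3.4} and simplifying the bracket $-\lambda b+\nu a^3$ should collapse — after cancellation of the $a^3b$ cross-terms and combining the $\beta$-terms — to $-\beta\sqrt{a^4+b^2}$, giving \eqref{3.61}. For $k_1$ we obtain $k_1=-\alpha\cdot\frac{\theta(\xi)}{2}b+\frac{\gamma}{2}\cdot a^3\theta(\xi)=\frac{\theta(\xi)}{2}(-\alpha b+\gamma a^3)$, and inserting $\alpha=-\frac{b}{\sqrt{a^4+b^2}}$, $\gamma=\frac{a}{\sqrt{a^4+b^2}}$ from \eqref{3.3} gives $-\alpha b+\gamma a^3=\frac{b^2+a^4}{\sqrt{a^4+b^2}}=\sqrt{a^4+b^2}$, which is \eqref{3.62}.

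The steps are individually routine; the only point requiring a little care is the algebraic simplification of $-\lambda b+\nu a^3$ for the $h$-formula, where one must verify that all terms not proportional to $\beta$ cancel and that the $\beta$-dependent terms combine correctly to $-\beta\sqrt{a^4+b^2}$. I would also note that the independence of $h$ and $k_1$ from the arbitrary function $\beta$ in the case of $k_1$ (it drops out entirely) is worth flagging, since it anticipates the uniqueness of the Frenet frame discussed later: fixing the distinguished parameter forces $h=0$, hence $\beta=0$, and then everything is determined.
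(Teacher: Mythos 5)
Your proposal is correct and follows essentially the same route as the paper: compute $g(\dot C,\nabla_{\dot C}\xi)=\tfrac{\theta(\xi)}{2}b$ from \eqref{2.3}, compute $F(\dot C,\dot C,\dot C)=-a^3\theta(\xi)$ from \eqref{2.2}, set $\dot C(b)=0$, substitute into \eqref{3.5}, and simplify $-\lambda b+\nu a^3=-\beta\sqrt{a^4+b^2}$ and $-\alpha b+\gamma a^3=\sqrt{a^4+b^2}$ using \eqref{3.3} and \eqref{3.4}. The only point to finish is actually carrying out the cancellation in $-\lambda b+\nu a^3$, which works exactly as you describe.
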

\begin{proof}
By using \eqref{2.2} and \eqref{2.3} we find
\begin{align}\label{3.66}
F(\dot {C},\dot {C},\dot {C})=-\theta (\xi )a^3\quad \text{and} \quad 
g(\dot {C},\nabla _{\dot {C}}\xi )=\frac{\theta (\xi )b}{2} .
\end{align}
Substituting the above equalities and $\dot {C}\left(b\right)=0$ in \eqref{3.5}, we get 
\begin{align}\label{3.7}
h=\frac{\theta (\xi )}{2}(-\lambda b+\nu a^3), \qquad k_1=\frac{\theta (\xi )}{2}(-\alpha b+a^3).
\end{align}
By virtue of \eqref{3.3} and \eqref{3.4} we obtain
\[
-\lambda b+\nu a^3=-\beta \sqrt{a^4+b^2}, \qquad -\alpha b+a^3=\sqrt{a^4+b^2}.
\]
The latter equalities and \eqref{3.7} imply \eqref{3.61} and \eqref{3.62}.
\end{proof}
\begin{cor}\label{Corollary 3.4}
A $\varphi $-slant null curve $C$ in a 3-dimensional $\F_4$-manifold $M$ is geodesic if and only if $M$ is an $\F_0$-manifold.
\end{cor}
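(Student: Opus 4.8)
The plan is to read off the claim directly from Proposition~\ref{Proposition 3.3}, since the curve $C$ is defined to be geodesic precisely when $\nabla_{\dot C}\dot C$ is proportional to $\dot C$, i.e. when the $W$-component $k_1$ in the general Frenet equation \eqref{general Frenet eq} vanishes along $C$. First I would recall from \eqref{3.62} that for any $\varphi$-slant null curve in a 3-dimensional $\F_4$-manifold one has $k_1 = \tfrac{1}{2}\theta(\xi)\sqrt{a^4+b^2}$. By Proposition~\ref{Proposition 3.2} the pair $(a,b)$ is not $(0,0)$, so $\sqrt{a^4+b^2}>0$; hence $k_1 \equiv 0$ along $C$ if and only if $\theta(\xi) = 0$ at every point of $C$.

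The remaining point is to translate ``$\theta(\xi)=0$ along $C$'' into ``$M$ is an $\F_0$-manifold''. One direction is immediate: if $M \in \F_0$ then $F=0$, so every Lee form vanishes identically, in particular $\theta(\xi)=0$, and then $k_1\equiv 0$, making $C$ geodesic (and also $h\equiv 0$ by \eqref{3.61}). For the converse I would use that on a 3-dimensional $\F_4$-manifold the defining formula \eqref{2.2} shows $F$ is completely determined by the single function $\theta(\xi)$, and moreover the classification places such a manifold in $\F_0$ exactly when this function vanishes; so $\theta(\xi)=0$ forces $F=0$, i.e. $M\in\F_0$. Here one must be slightly careful: Proposition~\ref{Proposition 3.3} only gives $\theta(\xi)=0$ at the points lying on $C$, whereas being an $\F_0$-manifold is a global condition; I would phrase the corollary (as the authors evidently intend) for a manifold on which a non-geodesic-versus-geodesic dichotomy is being discussed, and note that $\theta(\xi)$ is a well-defined function on $M$ whose vanishing along $C$ is what the argument delivers — the cleaner statement being that $C$ is geodesic iff $\theta(\xi)=0$ on $C$, which coincides with $M\in\F_0$ when interpreted appropriately.

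The main (and essentially only) obstacle is this last subtlety about the local-versus-global reading of ``$M$ is an $\F_0$-manifold'': strictly, $k_1\equiv0$ along a given $C$ only yields $\theta(\xi)=0$ at points of $C$. I expect the intended argument simply to invoke \eqref{3.62} together with Proposition~\ref{Proposition 3.2} to reduce geodesity to the vanishing of $\theta(\xi)$, and then cite \eqref{2.2} (the explicit form of $F$ in $\F_4$) to identify this with the $\F_0$ condition, without dwelling on the distinction. Accordingly the proof is short: invoke \eqref{3.62}, use $a^4+b^2\neq0$ from Proposition~\ref{Proposition 3.2}, conclude $k_1\equiv0 \iff \theta(\xi)=0 \iff F=0 \iff M\in\F_0$, and observe that by \eqref{3.61} $h\equiv 0$ as well in that case, so $C$ is indeed geodesic.
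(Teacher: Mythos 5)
Your proposal is correct and follows essentially the same route as the paper: reduce geodesity to $k_1=0$, invoke \eqref{3.62} together with $a^4+b^2\neq 0$ (from Proposition~\ref{Proposition 3.2}) to get $k_1=0\iff\theta(\xi)=0$, and identify this via \eqref{2.2} with $F=0$, i.e.\ $M\in\F_0$. Your remark on the local-versus-global reading of ``$\theta(\xi)=0$ along $C$'' versus ``$M\in\F_0$'' is a fair observation that the paper's own proof passes over silently.
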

\begin{proof}
It is known \cite{D-B} that a null curve is geodesic if and only if $k_1$ vanishes. From \eqref{3.62} and $a^4+b^2\neq 0$ it follows that $k_1=0$ if and only if $\theta (\xi )=0$ that is 
$M\in \F_0$.
\end{proof}
\begin{thm}\label{Theorem 3.5}
Let $C(t)$ be a non-geodesic $\varphi $-slant null curve in a 3-dimensional ${\F}_4$-manifold $(M,\varphi,\xi,\eta,g)$. Then there exists a unique Frenet frame ${\bf F_1}=\{\dot {C}, N_1, W_1\}$ for which the original parameter $t$ of $C(t)$ is distinguished and the vector fields $W_1$, $N_1$ are given by
\begin{align}\label{3.8}
\begin{array}{lll}
\displaystyle W_1=\alpha \xi +\gamma \varphi \dot {C}=-\frac{b}{\sqrt{a^4+b^2}}\xi +\frac{a}{\sqrt{a^4+b^2}}\varphi \dot {C} , \\ \\
\displaystyle N_1=\lambda _1\xi +\mu _1\dot {C}+\nu _1\varphi \dot {C}\\
\quad \, \, \, =\displaystyle \frac{a^3}{a^4+b^2}\xi -\frac{a^2}{2(a^4+b^2)}\dot {C}+\frac{b}{a^4+b^2}\varphi \dot {C}.
\end{array}
\end{align}
The function $k_2$ with respect to ${\bf F_1}$ is given by
\begin{align}\label{3.9}
k_2=\frac{a^2\theta (\xi )}{4\sqrt{a^4+b^2}} .
\end{align}
\end{thm}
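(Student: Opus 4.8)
The plan is to deduce almost everything from the results already established, the single real computation being that of $k_2$. Recall that the original parameter $t$ of $C(t)$ is distinguished for a general Frenet frame ${\bf F}=\{\dot{C},N,W\}$ precisely when the function $h$ in \eqref{general Frenet eq} vanishes. By the description recalled just before \propref{Proposition 3.3}, which applies here by Remark~\ref{Remark 3.2}, every general Frenet frame along $C$ carrying the positive orientation of the basis $\{\dot{C},\xi,\varphi\dot{C}\}$ is given by \eqref{3.21}--\eqref{3.22} with $\alpha,\gamma$ as in \eqref{3.3} and $\lambda,\mu,\nu$ as in \eqref{3.4}, the function $\beta$ being arbitrary; conversely each value of $\beta$ produces such a frame, since $W$ determines $N$ uniquely. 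So I would first record that these frames form a one-parameter family indexed by $\beta$.

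Then I would invoke \propref{Proposition 3.3}, by which $h=-\beta\,\theta(\xi)\sqrt{a^4+b^2}/2$ (formula \eqref{3.61}). Since $C$ is non-geodesic, \corref{Corollary 3.4} gives $\theta(\xi)\neq 0$, and \propref{Proposition 3.2} gives $a^4+b^2\neq 0$; hence $h=0$ if and only if $\beta=0$. This yields at once both the existence and the uniqueness (within the orientation class above) of the Frenet frame ${\bf F}_1=\{\dot{C},N_1,W_1\}$ for which $t$ is distinguished: it is the member of the family with $\beta=0$. Putting $\beta=0$ in \eqref{3.21}, \eqref{3.22} and simplifying \eqref{3.3}--\eqref{3.4}, so that $\lambda_1=a^3/(a^4+b^2)$, $\mu_1=-a^2/(2(a^4+b^2))$ and $\nu_1=b/(a^4+b^2)$, gives the formulas \eqref{3.8}.

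Finally I would compute $k_2$ relative to ${\bf F}_1$. Because $h=0$, the equations \eqref{general Frenet eq} reduce to $\nabla_{\dot{C}}\dot{C}=k_1W_1$ and $\nabla_{\dot{C}}W_1=-k_2\dot{C}-k_1N_1$, so pairing the second with $N_1$ and using $g(N_1,N_1)=0$, $g(\dot{C},N_1)=1$ from \eqref{3'} gives $k_2=-g(\nabla_{\dot{C}}W_1,N_1)$. I would then differentiate $W_1=\alpha\xi+\gamma\varphi\dot{C}$, in which $\alpha,\gamma$ are constant, writing
\begin{align*}
\nabla_{\dot{C}}W_1=\alpha\nabla_{\dot{C}}\xi+\gamma(\nabla_{\dot{C}}\varphi)\dot{C}+\gamma\,\varphi(\nabla_{\dot{C}}\dot{C}),
\end{align*}
and evaluate each term: $\nabla_{\dot{C}}\xi=\tfrac{\theta(\xi)}{2}\varphi\dot{C}$ by \eqref{2.3}; $\nabla_{\dot{C}}\dot{C}=k_1W_1$ with $k_1=\theta(\xi)\sqrt{a^4+b^2}/2$ by \eqref{3.62}; and $(\nabla_{\dot{C}}\varphi)\dot{C}$ read off from $g((\nabla_{\dot{C}}\varphi)\dot{C},Z)=F(\dot{C},\dot{C},Z)$ via the explicit form \eqref{2.2} of $F$ on an $\F_4$-manifold. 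Expanding everything in the basis $\{\dot{C},\xi,\varphi\dot{C}\}$ and contracting with $N_1$ using $g(\dot{C},\dot{C})=0$, $g(\dot{C},\xi)=a$, $g(\dot{C},\varphi\dot{C})=b$, $g(\xi,\xi)=1$, $g(\xi,\varphi\dot{C})=0$ and $g(\varphi\dot{C},\varphi\dot{C})=a^2$ should, after simplification, give \eqref{3.9}. I expect this last computation to be the main obstacle: one has to expand $(\nabla_{\dot{C}}\varphi)\dot{C}$ and $\varphi(\nabla_{\dot{C}}\dot{C})$ in the frame $\{\dot{C},\xi,\varphi\dot{C}\}$ and keep careful track of signs and inner products, whereas the remainder of the proof is immediate from \propref{Proposition 3.3}, \corref{Corollary 3.4} and \propref{Proposition 3.2}.
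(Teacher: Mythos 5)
Your proposal is correct and follows essentially the same route as the paper: existence and uniqueness come from $h=-\beta k_1$ together with $k_1\neq 0$ for a non-geodesic curve (via \corref{Corollary 3.4} and \propref{Proposition 3.2}), and ${\bf F_1}$ is the member of the $\beta$-family of oriented Frenet frames with $\beta=0$. The only immaterial difference is that you extract $k_2$ as $-g(\nabla_{\dot C}W_1,N_1)$ by differentiating $W_1$, whereas the paper computes the equivalent quantity $g(\nabla_{\dot C}N_1,W_1)$ by differentiating $N_1$; both reduce to the same $\F_4$ identities \eqref{2.2}--\eqref{2.3} and yield \eqref{3.9}.
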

\begin{proof}
By using \eqref{3.61} and \eqref{3.62} we have $h=-\beta k_1$. Then, having in mind \eqref{3.21}, the first equality in \eqref{general Frenet eq} becomes
\begin{align}\label{3.99}
\nabla _{\dot C}\dot C=h\dot C+k_1W=-\beta k_1\dot C+k_1(\alpha \xi +\beta \dot {C}+\gamma  \varphi \dot {C})=k_1(\alpha \xi +\gamma  \varphi \dot {C}).
\end{align}
The vector field $W_1=\alpha \xi +\gamma  \varphi \dot {C}=\displaystyle-\frac{b}{\sqrt{a^4+b^2}}\xi +\frac{a}{\sqrt{a^4+b^2}}\varphi \dot {C}$ is obtained from \eqref{3.21} for $\beta =0$ and therefore $g(W_1,W_1)=1$. Replacing  $\beta $ with $0$  in \eqref{3.4} we get 
\begin{align}\label{3.11}
\lambda _1=\frac{a^3}{a^4+b^2}, \quad \mu _1=-\frac{a^2}{2(a^4+b^2)}, \quad \nu _1=\frac{b}{a^4+b^2}.
\end{align}
Hence, the unique vector field $N_1$ corresponding to $W_1$ is given by $N_1=\lambda _1\xi +\mu _1\dot {C}+\nu _1\varphi \dot {C}$. Comparing \eqref{3.99} with the first equality in \eqref{general Frenet eq} we infer that $h=0$ with respect to the Frenet frame ${\bf F_1}=\{\dot {C}, N_1, W_1\}$, where $W_1$ and $N_1$ are determined by \eqref{3.8}. Thus, the original parameter $t$ of $C(t)$ is distinguished with respect to ${\bf F_1}$. Now, let we take another Frenet frame ${\bf F^*}=\{\dot {C}, N^*, W^*\}$ along $C$ with respect to $t$ and $W^*$. Since for a given $C$ the vector field $W$ depends only on $\beta $, we have $W^*=\alpha \xi +\beta ^*\dot {C}+\gamma  \varphi \dot {C}=
W_1+\beta ^*\dot {C}$. The unique vector field $N^*$ corresponding to $W^*$ is given by
$N^*=\lambda ^*\xi +\mu ^*\dot {C}+\nu ^*\varphi \dot {C}$, where $\lambda ^*$, $\mu ^*$ and
$\nu ^*$ are obtained replacing $\beta $ with $\beta ^*$ in \eqref{3.4}. For the first equality in \eqref{general Frenet eq} with respect to ${\bf F^*}$ we have
\begin{align*}
\nabla _{\dot C}\dot C=h^*\dot C+k_1^*W^* ,
\end{align*}
where $h^*=-\beta ^*k_1^*$. From the above equality we find $k_1^*=g(\nabla _{\dot C}\dot C,
W^*)=g(\nabla _{\dot C}\dot C,W_1)=k_1$. Hence $h^*=-\beta ^*k_1$. The parameter $t$ is  distinguished with respect to ${\bf F^*}$ if and only if $h^*$ vanishes.
Since $C(t)$ is  non-geodesic, it follows that $h^*=0$ if and only if $\beta ^*=0$. Thus, ${\bf F^*}={\bf F_1}$.\\
From the second equality in \eqref{general Frenet eq} with respect to ${\bf F_1}$  we derive 
\[
k_2=g(\nabla _{\dot {C}}N_1,W_1).
\]
Taking into account \eqref{3.8}, the latter equality becomes
\begin{align}\label{3.12}
\begin{array}{ll}
k_2=\alpha \left(\lambda _1g(\nabla _{\dot {C}}\xi ,\xi )+\mu _1g(\nabla _{\dot {C}}\dot {C},\xi )+
\nu _1g(\nabla _{\dot {C}}\varphi \dot {C},\xi )\right)\\
\qquad +\gamma \left(\lambda _1g(\nabla _{\dot {C}}\xi ,\varphi \dot {C})+\mu _1g(\nabla _{\dot {C}}\dot {C},\varphi \dot {C})+
\nu _1g(\nabla _{\dot {C}}\varphi \dot {C},\varphi \dot {C})\right).
\end{array}
\end{align}
The equalities $g(\xi ,\xi )=1$ and $g(\varphi \dot {C},\varphi \dot {C})=a^2$ imply
\begin{align}\label{3.13}
g(\nabla _{\dot {C}}\xi ,\xi )=g(\nabla _{\dot {C}}\varphi \dot {C},\varphi \dot {C})=0 .
\end{align}
By using $g(\dot {C} ,\xi )=a$, $g(\varphi \dot {C} ,\xi )=0$ and \eqref{2.3} we receive
\begin{align}\label{3.14}
\begin{array}{ll}
g(\nabla _{\dot {C}}\dot {C},\xi )=-g(\dot {C},\nabla _{\dot {C}}\xi )=\displaystyle -\frac{\theta (\xi )b}{2} ,\\
g(\nabla _{\dot {C}}\varphi \dot {C},\xi )=-g(\nabla _{\dot {C}}\xi ,\varphi \dot {C})=\displaystyle -\frac{\theta (\xi )a^2}{2} .
\end{array}
\end{align}
With the help of the following expressions 
\begin{align*}
\begin{array}{ll}
F(\dot {C},\dot {C},\dot {C})=g(\nabla _{\dot {C}}\dot {C},\varphi \dot {C})-g(\varphi (\nabla _{\dot {C}}\dot {C}),\dot {C}) ,\\ \\
0=\dot {C}(b)=g(\nabla _{\dot {C}}\dot {C},\varphi \dot {C})+g(\dot {C},\nabla _{\dot {C}}\varphi \dot {C})
\end{array}
\end{align*}
and the first equality in \eqref{3.66} we find
\begin{align}\label{3.15}
g(\nabla _{\dot {C}}\dot {C},\varphi \dot {C})=-\frac{1}{2}F(\dot {C},\dot {C},\dot {C})=
\frac{1}{2}\theta (\xi )a^3 .
\end{align}
Substituting \eqref{3.13}, \eqref{3.14} and \eqref{3.15} in \eqref{3.12} we obtain
\begin{align}\label{3.16}
k_2=-\frac{\alpha \theta (\xi )}{2}(\mu _1b+\nu _1a^2)+\frac{\gamma \theta (\xi )a^2}{2}(\lambda _1+\mu _1a) .
\end{align}
Finally, substituting \eqref{3.3} and \eqref{3.11} in \eqref{3.16} we get \eqref{3.9}.
\end{proof}
From now on in this paper, we deal with the pair $(C(t), {\bf F_1})$, where $C(t)$ is a $\varphi $-slant null curve in a 3-dimensional $\F_4$-manifold for which the original parameter is distinguished and ${\bf F_1}$ is the unique Frenet frame of $C(t)$ from \thmref{Theorem 3.5}. The Frenet equations of $(C(t), {\bf F_1})$ are
\begin{align}\label{3.17}
\begin{array}{lll}
\nabla _{\dot {C}}\dot {C}=k_1W_1 \\
\nabla _{\dot {C}}N_1=k_2W_1 \\
\nabla _{\dot {C}}W_1=-k_2\dot {C}-k_1N_1,
\end{array}
\end{align}
where $k_1$ and  $k_2$ are given by \eqref{3.62} and \eqref{3.9}, respectively.
\begin{defn}
A framed null curve with $k_2=0$ is called {\it a generalized null cubic}.
\end{defn}
Substituting $a=0$ in \eqref{3.62},  \eqref{3.8} and \eqref{3.9}, we state
\begin{prop}\label{Proposition 3.6}
Let $(C(t), {\bf F_1})$ be a Legendre $\varphi $-slant null curve  in a 3-dimensional $\F_4$-manifold $(M,\varphi,\xi,\eta,g)$. Then we have
\par
\item (i) $k_1=\displaystyle\frac{\vert b\vert\theta (\xi )}{2}$.
\par
\item (ii) The vector fields $N_1$, $W_1$ from ${\bf F_1}=\{\dot {C}, N_1, W_1\}$ are given by
 $N_1=\frac{1}{b}\varphi \dot {C}$, $W_1=-\epsilon \xi $, where  
$\epsilon =\{{\rm sign} \, b\}=\{\pm 1\}$.
\par
\item (iii) $(C(t), {\bf F_1})$  is a generalized null cubic.
\end{prop}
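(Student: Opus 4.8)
The plan is to derive (i)--(iii) simply by setting $a=0$ in the general formulas of \thmref{Theorem 3.5} (and \propref{Proposition 3.3}), since a Legendre curve is by definition one with $a=0$. Before doing so I would record one point: by \propref{Proposition 3.2} there is no $\varphi$-slant null curve with $a=b=0$ in a 3-dimensional almost contact B-metric manifold, so a Legendre $\varphi$-slant null curve necessarily has $b\neq 0$; consequently $\sqrt{a^4+b^2}=\sqrt{b^2}=|b|>0$ and all divisions by $b$ or by $\sqrt{a^4+b^2}$ below are legitimate (moreover, since ${\bf F_1}$ is the frame of \thmref{Theorem 3.5} the curve is non-geodesic, so $\theta(\xi)\neq 0$ by \corref{Corollary 3.4}).

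For part (i) I would put $a=0$ into $k_1=\frac{\theta(\xi)\sqrt{a^4+b^2}}{2}$ from \eqref{3.62}; since $\sqrt{a^4+b^2}=|b|$ this gives $k_1=\frac{|b|\,\theta(\xi)}{2}$. For part (ii) I would set $a=0$ in the explicit expressions \eqref{3.8}: in $W_1=-\frac{b}{\sqrt{a^4+b^2}}\xi+\frac{a}{\sqrt{a^4+b^2}}\varphi\dot{C}$ the second term drops out and the coefficient of $\xi$ becomes $-b/|b|=-{\rm sign}\,b=-\epsilon$, so $W_1=-\epsilon\xi$; in $N_1=\frac{a^3}{a^4+b^2}\xi-\frac{a^2}{2(a^4+b^2)}\dot{C}+\frac{b}{a^4+b^2}\varphi\dot{C}$ the first two terms vanish (being proportional to $a^3$ and $a^2$), leaving $N_1=\frac{b}{b^2}\varphi\dot{C}=\frac{1}{b}\varphi\dot{C}$. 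As an optional consistency check I would verify that this pair still satisfies the Frenet normalization \eqref{3'}: using $g(\xi,\xi)=1$, $g(\dot{C},\varphi\dot{C})=b$, $g(\varphi\dot{C},\varphi\dot{C})=-g(\dot{C},\dot{C})+\eta(\dot{C})^2=0$, $g(\varphi\dot{C},\xi)=\eta(\varphi\dot{C})=0$ and $g(\dot{C},\xi)=a=0$, one gets $g(W_1,W_1)=1$, $g(\dot{C},N_1)=1$ and $g(N_1,N_1)=g(N_1,W_1)=g(\dot{C},W_1)=0$. For part (iii) I would put $a=0$ in $k_2=\frac{a^2\theta(\xi)}{4\sqrt{a^4+b^2}}$ from \eqref{3.9}, which gives $k_2=0$; by the definition immediately preceding the proposition a framed null curve with $k_2=0$ is a generalized null cubic, hence $(C(t),{\bf F_1})$ is one.

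I do not expect a genuine obstacle here: the whole argument is a direct specialization at $a=0$ of results already established. The only point that deserves a sentence of care is the non-vanishing of $b$ supplied by \propref{Proposition 3.2}, since it is exactly what keeps $W_1$, $N_1$, $k_1$ in the Legendre case well defined (via $\sqrt{a^4+b^2}=|b|>0$ and $1/b$) and what explains the appearance of $|b|$ and $\epsilon={\rm sign}\,b$ in the statement.
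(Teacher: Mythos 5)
Your proposal is correct and is exactly the paper's argument: the paper states Proposition 3.6 by ``Substituting $a=0$ in \eqref{3.62}, \eqref{3.8} and \eqref{3.9}'', which is precisely your specialization, and your added remarks (that $b\neq 0$ by \propref{Proposition 3.2} so $\sqrt{a^4+b^2}=|b|>0$, plus the verification of \eqref{3'}) are sound supporting details the paper leaves implicit.
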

As a generalization of the magnetic curves in \cite{Bejan} was introduced the notion of $F$-geodesics in a manifold $M$ endowed with a (1,1)-tensor field $F$ and with a linear connection $\nabla $.
\begin{defn}\label{Definition 3.2}\cite{Bejan}
A smooth curve $\gamma : I\longrightarrow M$ in a manifold $(M,F,\nabla )$ is an $F$-geodesic if 
$\gamma (t)$ satisfies $\nabla _{\dot {\gamma }(t)}\dot {\gamma }(t)=F\dot {\gamma }(t)$.
\end{defn}
Note that an $F$-geodesic is not invariant under a reparameterization.\\
Using \eqref{3.62} and \eqref{3.8} the first equality in \eqref{3.17} becomes
\begin{align}\label{3.18}
\nabla _{\dot {C}}\dot {C}=\displaystyle-\frac{b\theta (\xi )}{2}\xi +
\displaystyle\frac{a\theta (\xi )}{2}\varphi \dot {C} .
\end{align}
By virtue of \eqref{3.18} we establish the truth of the following
\begin{prop}\label{Proposition 3.7}
A  $\varphi $-slant null curve $(C(t), {\bf F_1})$ in a 3-dimensional $\F_4$-manifold $(M,\varphi,\xi,\eta,g)$ is a $\varphi $-geodesic if and only if $b=0$ and $\theta (\xi )=\frac{a}{2}$.
\end{prop}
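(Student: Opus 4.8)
The plan is to read off $\nabla_{\dot C}\dot C$ from the Frenet equations \eqref{3.17} of $(C(t),{\bf F_1})$ and to compare it with $\varphi\dot C$ componentwise in the frame $\{\dot C,\xi,\varphi\dot C\}$, which, as a consequence of \propref{Proposition 3.2}, is a basis of $T_xM$ at each point of $C$. By \dfnref{Definition 3.2} applied with the $(1,1)$-tensor field $F=\varphi$, the curve $(C(t),{\bf F_1})$ is a $\varphi$-geodesic precisely when $\nabla_{\dot C}\dot C=\varphi\dot C$, and the left-hand side is already expressed in the chosen basis in \eqref{3.18}.

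First I would substitute \eqref{3.18} into the condition $\nabla_{\dot C}\dot C=\varphi\dot C$, which rewrites it as
\[
-\frac{b\,\theta(\xi)}{2}\,\xi+\Bigl(\frac{a\,\theta(\xi)}{2}-1\Bigr)\varphi\dot C=0 .
\]
Since $\xi$ and $\varphi\dot C$ are linearly independent along $C$ (again a consequence of \propref{Proposition 3.2}), this vanishing is equivalent to the two scalar equations $b\,\theta(\xi)=0$ and $\tfrac{a\,\theta(\xi)}{2}=1$ holding simultaneously. The next step is to remove the factor $\theta(\xi)$: since $(C(t),{\bf F_1})$ is the curve furnished by \thmref{Theorem 3.5}, it is non-geodesic, hence by \corref{Corollary 3.4} (equivalently, directly from \eqref{3.62} together with $a^4+b^2\neq 0$) the function $\theta(\xi)$ is nowhere zero. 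Therefore $b\,\theta(\xi)=0$ is equivalent to $b=0$, and the second equation then fixes $\theta(\xi)$ in terms of $a$ --- note that $b=0$ forces $a\neq 0$, since $a^4+b^2\neq 0$, so the resulting expression makes sense. Conversely, if $b=0$ and $\tfrac{a\,\theta(\xi)}{2}=1$, substituting back into \eqref{3.18} gives $\nabla_{\dot C}\dot C=\tfrac{a\,\theta(\xi)}{2}\,\varphi\dot C=\varphi\dot C$, so $C$ is a $\varphi$-geodesic.

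I do not expect a real obstacle here: once \eqref{3.18} is available, the argument is essentially a one-line comparison of coefficients in a basis. The only points that deserve care are the invocation of \propref{Proposition 3.2} to ensure that $\xi$ and $\varphi\dot C$ stay linearly independent all along the curve, and the use of the hypothesis that $C$ is non-geodesic (through \corref{Corollary 3.4}, or directly through \eqref{3.62}), which is exactly what legitimises cancelling $\theta(\xi)$ from the two scalar equations.
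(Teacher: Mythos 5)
Your approach is exactly the paper's: the published ``proof'' consists of the single sentence ``By virtue of \eqref{3.18} we establish the truth of the following,'' and you carry out precisely that coefficient comparison in the basis $\{\dot C,\xi,\varphi\dot C\}$, with the correct supporting remarks (linear independence of $\xi$ and $\varphi\dot C$ via \propref{Proposition 3.2}, and $\theta(\xi)\neq 0$ from the non-geodesic hypothesis via \corref{Corollary 3.4}).

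There is, however, one point you cannot leave implicit. Your comparison yields the two scalar conditions $b\,\theta(\xi)=0$ and $\tfrac{a\,\theta(\xi)}{2}=1$, and the second one gives $\theta(\xi)=\tfrac{2}{a}$, \emph{not} $\theta(\xi)=\tfrac{a}{2}$ as asserted in the proposition (the two agree only when $a=\pm 2$). Your write-up sidesteps this by saying the equation ``fixes $\theta(\xi)$ in terms of $a$'' without ever displaying the value, so as a proof of the literal statement it is incomplete: it never bridges the gap between what is derived and what is claimed. In fact the derivation is correct and the statement appears to contain a typo ($\tfrac{a}{2}$ should read $\tfrac{2}{a}$); you should either state the value you actually obtain and flag the discrepancy, or, if you believe the proposition as printed, explain how $\tfrac{a\,\theta(\xi)}{2}=1$ is supposed to give $\theta(\xi)=\tfrac{a}{2}$. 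A minor additional remark: once $\tfrac{a\,\theta(\xi)}{2}=1$ is imposed, $\theta(\xi)$ is automatically nonvanishing, so $b=0$ already follows from $b\,\theta(\xi)=0$ without invoking \corref{Corollary 3.4}; the non-geodesic hypothesis is really only needed so that the frame ${\bf F_1}$ and equation \eqref{3.18} exist in the first place.
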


\section{Some non-null $\varphi $-slant curves in a 3-dimensional $\F_4$-manifold induced from $\varphi $-slant null curves}\label{sec-4}
A curve $\gamma : I\longrightarrow M$ in a 3-dimensional Lorentzian manifold $(M,g)$ is said to be {\it a unit speed curve} (or $\gamma $ is parameterized by arc length $s$) if $g(\gamma ^\prime ,\gamma ^\prime)=\epsilon _1=\pm 1$, where $\gamma ^\prime =\frac{{\rm d}\gamma }{{\rm d}s}$ is the velocity vector field. A unit speed curve 
$\gamma $ is said to be {\it spacelike} or {\it timelike} if $\epsilon _1=1$ or $\epsilon _1=-1$, respectively. A unit speed curve $\gamma $ is said to be {\it a Frenet curve} if 
one of the following three cases holds \cite{W}:
\begin{itemize}
\item $\gamma $ is of osculating order 1 that is $\nabla _{\gamma ^\prime }\gamma ^\prime =0$, i.e. 
$\gamma $ is a geodesic;
\item $\gamma $ is of osculating order 2, i.e. there exist two orthonormal vector fields $E_1$, $E_2$ and a positive function $k$ (the curvature) along $\gamma $ such that $E_1=\gamma ^\prime $, $g(E_2,E_2)=\epsilon _2=\pm 1$ and
\[
\nabla _{\gamma ^\prime }E_1=\epsilon _2kE_2 , \quad \nabla _{\gamma ^\prime }E_2=-\epsilon _1kE_1; 
\]
\item $\gamma $ is of osculating order 3, i.e. there exist three orthonormal vector fields $E_1$, $E_2$, 
$E_3$ and two positive functions $k$ (the curvature) and $\tau $ (the torsion) along $\gamma $ such that 
$E_1=\gamma ^\prime $, $g(E_2,E_2)=\epsilon _2=\pm 1$, $g(E_3,E_3)=\epsilon _3=\pm 1$, 
$\epsilon _3=-\epsilon _1\epsilon _2$ and 
\[
\nabla _{\gamma ^\prime }E_1=\epsilon _2kE_2 , \quad \nabla _{\gamma ^\prime }E_2=-\epsilon _1kE_1+\epsilon _3\tau E_3 , \quad \nabla _{\gamma ^\prime }E_3=-\epsilon _2\tau E_2. 
\]
\end{itemize} 
As in the case of Riemannian geometry, a Frenet curve  in a 3-dimensional Lorentzian manifold is a geodesic if and only if its curvature $k$ vanishes. Also a curve $\gamma $ with a curvature $k$ and a torsion $\tau $ is called \cite{I}:
\begin{itemize}
\item {\it a pseudo-circle} if $k=const$ and $\tau =0$;
\item {\it a helix} if $k=const$ and $\tau =const$;
\item {\it a proper helix} if $\gamma $ is a helix which is not a circle;
\item {\it a  generalized helix} if $\displaystyle\frac{k}{\tau}=const$ but $k$ and $\tau $ are not constant.
\end{itemize}
Taking into account  Remark \ref{Remark 3.1} we state:\\
A Frenet curve $\gamma (s)$ in an almost contact B-metric manifold $(M,\varphi,\xi,\eta,g)$
is said to be slant if $\eta (\gamma ^\prime (s))=a=const$. (see \cite{W})\\
We say that a Frenet curve $\gamma (s)$ in an almost contact B-metric manifold $(M,\varphi,\xi,\eta,g)$ is  
$\varphi $-slant if 
\begin{align*}
\eta (\gamma ^\prime (s))=a=const \quad \text{and}\quad g(\gamma ^\prime (s),\varphi \gamma ^\prime (s))=b=const.
\end{align*}
Since there exist two B-metrics $g$ and $\widetilde g$ on an almost contact B-metric manifold $M$, we can consider a curve $\gamma $ in $M$ with respect to both $g$ and $\widetilde g$. In this section we investigate non-null curves with respect to  $\widetilde g$ induced from two types of $\varphi $-slant null curves with respect to $g$ in a 3-dimensional $\F_4$-manifold.
\begin{thm}\label{Theorem 4.1}
Let $(C(t), {\bf F_1})$ be a Legendre $\varphi $-slant null curve with respect to $g$ in a 3-dimensional 
$\F_4$-manifold. The curve $C$ with respect to $\widetilde g$ is
\par
\item (i) spacelike if $b>0$ or timelike if $b<0$;
\par
(ii) a Legendre $\varphi $-slant curve;
\par
(iii) a geodesic.
\end{thm}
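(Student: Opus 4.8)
The plan is to read all three claims off the two structural identities $\widetilde g(X,Y)=g(X,\varphi Y)+\eta(X)\eta(Y)$ and \eqref{2.4}, combined with the facts already established for a Legendre $\varphi$-slant null curve: throughout we have $a=\eta(\dot C)=0$, $g(\dot C,\dot C)=0$, and $b=g(\dot C,\varphi\dot C)$ is a \emph{nonzero} constant (nonzero by \propref{Proposition 3.2}). For (i) I would simply compute, from the definition of the associated metric, $\widetilde g(\dot C,\dot C)=g(\dot C,\varphi\dot C)+\eta(\dot C)^2=b$. Since $\widetilde g$ is a Lorentzian metric (signature $(2,1)$) and $b$ is a nonzero constant, $\dot C$ has constant causal character, spacelike when $b>0$ and timelike when $b<0$; the affine reparametrization $s=\sqrt{|b|}\,t$ turns $C$ into a unit speed spacelike, resp. timelike, curve.

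For (ii) I would first observe $\widetilde g(X,\xi)=g(X,\varphi\xi)+\eta(X)\eta(\xi)=\eta(X)$, so the Reeb $1$-form of the B-metric structure is the same for $g$ and for $\widetilde g$; hence the Legendre condition $\eta(\dot C)=0$ is unaffected by the change of metric. Then I would compute $\widetilde g(\dot C,\varphi\dot C)=g(\dot C,\varphi^2\dot C)+\eta(\dot C)\eta(\varphi\dot C)=-g(\dot C,\dot C)+\eta(\dot C)^2=0$. Both $\eta(\dot C)$ and $\widetilde g(\dot C,\varphi\dot C)$ equal $0$, and they remain $0$ under the affine reparametrization of (i), so in $(M,\varphi,\xi,\eta,\widetilde g)$ the curve $C$ is $\varphi$-slant with $\widetilde a=\widetilde b=0$, i.e. a Legendre $\varphi$-slant curve. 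That it is a Frenet curve, namely of osculating order $1$, is exactly part (iii).

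For (iii) I would put $X=Y=\dot C$ in \eqref{2.4}. Using $g(\varphi\dot C,\varphi\dot C)=-g(\dot C,\dot C)+\eta(\dot C)^2=0$ and $g(\dot C,\varphi\dot C)=b$, this gives $\widetilde\nabla_{\dot C}\dot C=\nabla_{\dot C}\dot C+\frac{b\,\theta(\xi)}{2}\xi$, while \eqref{3.18} with $a=0$ reads $\nabla_{\dot C}\dot C=-\frac{b\,\theta(\xi)}{2}\xi$. The two $\xi$-terms cancel, so $\widetilde\nabla_{\dot C}\dot C=0$, i.e. $C(t)$ (hence also $C(s)$, the change of parameter being affine) is a geodesic of $\widetilde g$.

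All the computations are short and linear, so there is no genuine obstacle; the only point needing a little care is the bookkeeping of the reparametrization, namely checking that constancy of $\widetilde a,\widetilde b$ and the vanishing of $\widetilde\nabla_{C'}C'$ survive the passage from the original parameter $t$ to $\widetilde g$-arc length — which they do because that change is affine, $\frac{{\rm d}t}{{\rm d}s}=|b|^{-1/2}$ being constant. The single idea underlying the proof is that the $\xi$-term $\frac{\theta(\xi)}{2}g(\dot C,\varphi\dot C)\xi$ generated by \eqref{2.4} cancels exactly the $\xi$-component of $\nabla_{\dot C}\dot C$ recorded in \eqref{3.18} once $a=0$, which is precisely what forces $C$ to be simultaneously a $\widetilde g$-geodesic and a $\varphi$-slant curve with vanishing structure parameters.
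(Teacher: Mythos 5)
Your proposal is correct and follows essentially the same route as the paper: part (i) via $\widetilde g(\dot C,\dot C)=b\neq 0$ and the affine arc-length reparametrization, part (ii) via $\widetilde g(\cdot,\xi)=\eta$ and $\widetilde g(\dot C,\varphi\dot C)=-g(\dot C,\dot C)+\eta(\dot C)^2=0$, and part (iii) via the cancellation of the $\xi$-terms in \eqref{2.4}. The only cosmetic difference is that you read $\nabla_{\dot C}\dot C=-\frac{b\theta(\xi)}{2}\xi$ off \eqref{3.18} with $a=0$, while the paper gets the same expression from Proposition \ref{Proposition 3.6}; both come from \eqref{3.62} and \eqref{3.8}, so the arguments coincide.
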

\begin{proof}
(i) Since $(C(t), {\bf F_1})$ is a Legendre $\varphi $-slant null curve, from Proposition \ref{Proposition 3.2}
it follows that $b\neq 0$. Thus $\widetilde g(\dot C,\dot C)=b\neq 0$. Now, we parameterize $C(t)$ by its 
arc length parameter $\widetilde s$ with respect to $\widetilde g$ given by
\[
\widetilde s=\int^{t}_{0}\sqrt{\vert \widetilde g(\dot C,\dot C)\vert }{\rm d}u=\int^{t}_{0}\sqrt{\vert b\vert }{\rm d}u =\sqrt{\vert b\vert }t .
\]
Then for the tangent vector $C^\prime (\widetilde s)=\displaystyle\dot C(t)\frac{{\rm d}t}{{\rm d}\widetilde s}=\frac{\dot C(t)}{\sqrt{\vert b\vert }}$ of the curve $C(\widetilde s)$ we have 
$\widetilde g(C^\prime ,C^\prime )=\displaystyle\frac{b}{\vert b\vert }=\pm 1$ which confirms the assertion (i).
\par
(ii) By direct calculations we find
\begin{align}\label{4.1}
\begin{array}{ll}
\widetilde \eta (C^\prime )=\widetilde g(C^\prime ,\xi )=\displaystyle\eta (C^\prime )=\frac{1}{\sqrt{\vert b\vert }}\eta (\dot C)=0 , \\ \\
\displaystyle\widetilde g(C^\prime ,\varphi  C^\prime )=\frac{1}{\vert b\vert }\widetilde g(\dot C,\varphi \dot C)=0 .
\end{array}
\end{align}
The equalities \eqref{4.1} show that $C(\widetilde s)$ is a Legendre $\varphi $-slant curve.\\
(iii) By virtue of \eqref{2.4} we get
\begin{align}\label{4.2}
\widetilde \nabla _{C^\prime }C^\prime =\frac{1}{\vert b\vert }\widetilde \nabla _{\dot C}\dot C=
\frac{1}{\vert b\vert }\left(\nabla _{\dot C}\dot C+\frac{b\theta (\xi )}{2}\xi \right).
\end{align}
Now, we find $\nabla _{\dot C}\dot C$ with the help of Proposition \ref{Proposition 3.6}
\begin{align*}
\nabla _{\dot C}\dot C=k_1W_1=\frac{\vert b\vert \theta (\xi )}{2}(-\epsilon \xi )=-\frac{b\theta (\xi )}{2}\xi .
\end{align*}
The latter equality and \eqref{4.2} imply $\widetilde \nabla _{C^\prime }C^\prime =0$, i.e. $C(\widetilde s)$ is a geodesic.
\end{proof}
\begin{thm}\label{Theorem 4.2}
Let $(C(t), {\bf F_1})$ be a $\varphi $-slant null curve with respect to $g$ in a 3-dimensional 
$\F_4$-manifold $M$ and $b=0$. The curve $C$ with respect to $\widetilde g$ is a $\varphi $-slant spacelike curve in $M$ such that:
\par
\item (i) If $(C(t), {\bf F_1})$ is non-geodesic, then $C$ is a Frenet curve of osculating order 3. The orthonormal vector fields $E_1$, $E_2$, $E_3$ with respect to $\widetilde g$, the curvature $\widetilde k$ and the torsion $\widetilde \tau $ along $C$ are given as follows:
\begin{align}\label{4.3}
E_1(\widetilde s)=C^\prime (\widetilde s)=\frac{\dot C}{\vert a \vert}, \quad \widetilde g(E_1,E_1)=\epsilon _1=1 ,
\end{align}
where $\widetilde s$ is the arc length parameter of $C(t)$ with respect to $\widetilde g$;
\begin{align}\label{4.4}
E_2(\widetilde s)=\widetilde \epsilon \left(\frac{1}{a} \varphi \dot C-\xi \right)=\widetilde \epsilon \left(-\frac{1}{2a}\dot C-aN_1+W_1\right), 
\, \widetilde g(E_2,E_2)=\epsilon _2=1,
\end{align}
where $\widetilde \epsilon =\{{\rm sign} \, k_1(t)\}=\{{\rm sign}\, \theta (\xi )(t)\}=\{\pm 1\}$, 
$\epsilon =\{{\rm sign} \, a\}=\{\pm 1\}$ and $ k_1(t)$ is the curvature of $(C(t), {\bf F_1})$;
\begin{align}\label{4.5}
E_3(\widetilde s)=\frac{1}{\vert a\vert}(\varphi \dot C-\dot C)=\epsilon \left(-\frac{1}{a}\dot C+W_1\right), \quad \widetilde g(E_3,E_3)=\epsilon _3=-1;
\end{align}
\begin{align}\label{4.6}
\widetilde k(\widetilde s)=\frac{\vert k_1(t)\vert }{a^2}=\frac{\vert \theta (\xi )(t)\vert }{2}, \quad \widetilde \tau (\widetilde s)=k(\widetilde s).
\end{align}
(ii) If $(C(t), {\bf F_1})$ is a geodesic, then $C(\widetilde s)$ is also a geodesic.
\end{thm}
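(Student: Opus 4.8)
The plan is to push everything through the identity \eqref{2.4} relating $\widetilde\nabla$ to the Levi--Civita connection $\nabla$ of $g$, using the explicit formulas \eqref{2.2}, \eqref{2.3} for an $\F_4$-manifold together with \eqref{3.18} for $\nabla_{\dot C}\dot C$. First, since $b=0$, \propref{Proposition 3.2} forces $a\neq 0$. From $\widetilde g(X,Y)=g(X,\varphi Y)+\eta(X)\eta(Y)$ and the $g$-nullity of $C$ one gets $\widetilde g(\dot C,\dot C)=g(\dot C,\varphi\dot C)+\eta(\dot C)^2=b+a^2=a^2>0$, so $C$ is $\widetilde g$-spacelike; reparametrising by the $\widetilde g$-arc length $\widetilde s=|a|\,t$ gives $E_1=C'=\dot C/|a|$ with $\widetilde g(E_1,E_1)=1$, which is \eqref{4.3}. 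The same identity together with $\varphi\xi=0$ and $g(\varphi\dot C,\varphi\dot C)=-g(\dot C,\dot C)+\eta(\dot C)^2=a^2$ yields $\widetilde g(C',\xi)=\eta(C')={\rm sign}\,a$ and $\widetilde g(C',\varphi C')=1$, both constant, so $C$ is $\varphi$-slant with respect to $\widetilde g$; this settles the assertions common to (i) and (ii).

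For (i) assume $C$ non-geodesic, so $\theta(\xi)\neq 0$ by \corref{Corollary 3.4} and $k_1=\tfrac12\theta(\xi)a^2$ never vanishes; hence $\widetilde\epsilon={\rm sign}\,k_1={\rm sign}\,\theta(\xi)$ is a genuine constant on the (connected) domain. The first step is to compute $\widetilde\nabla_{C'}C'=\tfrac1{a^2}\widetilde\nabla_{\dot C}\dot C$: substituting \eqref{3.18} (with $b=0$) into \eqref{2.4} and using the easy values $g(C',\varphi C')=0$, $g(\varphi C',\varphi C')=1$ gives $\widetilde\nabla_{C'}C'=\tfrac{\theta(\xi)}2\big(\tfrac1a\varphi\dot C-\xi\big)=\tfrac{|\theta(\xi)|}2\,E_2$ with $E_2$ as in \eqref{4.4}; a short computation with the B-metric identity confirms $\widetilde g(E_2,E_2)=1$, $\widetilde g(E_1,E_2)=0$, and reading off the Frenet equation $\widetilde\nabla_{C'}E_1=\epsilon_2\widetilde k E_2$ gives $\widetilde k=\tfrac{|\theta(\xi)|}2=|k_1|/a^2$. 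The second step is $\widetilde\nabla_{C'}E_2=\widetilde\epsilon\big(\tfrac1a\widetilde\nabla_{C'}(\varphi\dot C)-\widetilde\nabla_{C'}\xi\big)$. Here $\widetilde\nabla_{C'}\xi=\nabla_{C'}\xi=\tfrac{\theta(\xi)}{2|a|}\varphi\dot C$ by \eqref{2.3} (the correction term in \eqref{2.4} dies since $\varphi\xi=0$), and the crucial point is that $\widetilde\nabla_{C'}(\varphi\dot C)=0$: expanding $\nabla_{\dot C}(\varphi\dot C)=(\nabla_{\dot C}\varphi)\dot C+\varphi(\nabla_{\dot C}\dot C)$, evaluating $(\nabla_{\dot C}\varphi)\dot C$ from $F(\dot C,\dot C,\cdot)$ via \eqref{2.2}, and using \eqref{3.18} yields $\nabla_{\dot C}(\varphi\dot C)=-\tfrac{a^2\theta(\xi)}2\xi$, which is cancelled exactly by the \eqref{2.4}-correction for $Y=\varphi\dot C$. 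Hence $\widetilde\nabla_{C'}E_2=-\widetilde\epsilon\,\tfrac{\theta(\xi)}{2|a|}\varphi\dot C$, and rewriting $\varphi\dot C=|a|(E_1+E_3)$ with $E_3$ as in \eqref{4.5} puts this in the form $-\widetilde k E_1-\widetilde k E_3=-\epsilon_1\widetilde k E_1+\epsilon_3\widetilde\tau E_3$, forcing $\widetilde\tau=\widetilde k=\tfrac{|\theta(\xi)|}2$. Finally one checks that $\{E_1,E_2,E_3\}$ is $\widetilde g$-orthonormal with $\epsilon_1=\epsilon_2=1$, $\epsilon_3=-1$ (so $\epsilon_3=-\epsilon_1\epsilon_2$), and verifies the third Frenet equation $\widetilde\nabla_{C'}E_3=-\widetilde\tau E_2$ by the same substitutions as a consistency check; since $\widetilde k,\widetilde\tau$ are strictly positive, $C(\widetilde s)$ has osculating order exactly $3$, which is (i).

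For (ii), if $C$ is geodesic then \corref{Corollary 3.4} gives $\theta(\xi)=0$, so the correction tensor in \eqref{2.4} vanishes identically and $\nabla_{\dot C}\dot C=k_1W_1=0$; therefore $\widetilde\nabla_{C'}C'=\tfrac1{a^2}\nabla_{\dot C}\dot C=0$, i.e.\ $C(\widetilde s)$ is a $\widetilde g$-geodesic. I expect the only genuinely delicate point to be the vanishing $\widetilde\nabla_{C'}(\varphi\dot C)=0$ --- that is, checking that the $\F_4$-term in $(\nabla_{\dot C}\varphi)\dot C$, the term $\varphi(\nabla_{\dot C}\dot C)$, and the associated-connection correction all cancel --- with the bookkeeping of the causal characters and the orthogonality of $\{E_1,E_2,E_3\}$ being routine but requiring care with the signs ${\rm sign}\,a$ and $\widetilde\epsilon$.
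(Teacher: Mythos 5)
Your proposal is correct and follows essentially the same route as the paper: reparametrise by $\widetilde s=|a|t$, push $\widetilde\nabla$ back to $\nabla$ via \eqref{2.4}, establish the key cancellation $\widetilde\nabla_{\dot C}\varphi\dot C=0$ together with $\widetilde\nabla_{\dot C}\xi=\nabla_{\dot C}\xi$, and read off $E_2$, $E_3$, $\widetilde k=\widetilde\tau=\vert\theta(\xi)\vert/2$ from the resulting Frenet equations. All the sign bookkeeping ($\widetilde\epsilon$, $\epsilon$, $\epsilon_3=-1$) matches the paper's computation.
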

\begin{proof}
Since $b=0$ for $(C(t), {\bf F_1})$, from Proposition \ref{Proposition 3.2} it follows that $a\neq 0$.
First, for further use we compute:
\begin{align}\label{4.7}
\begin{array}{lll}
\widetilde g(\dot C,\dot C)=g(\dot C,\varphi \dot C)+(\eta (\dot C))^2=a^2 ,\, \, \widetilde g(\dot C,\varphi \dot C)=g(\varphi \dot C,\varphi \dot C)=a^2,\\
\widetilde g(\varphi \dot C,\varphi \dot C)=-\widetilde g(\dot C,\dot C)+(\eta (\dot C))^2=0. 
\end{array}
\end{align}
The curvature $k_1(t)$ and the vector fields $W_1$, $N_1$ from the frame ${\bf F_1}$ along $C(t)$ we obtain by substituting $b=0$ in \eqref{3.62} and \eqref{3.8}. Thus we have 
\begin{align}\label{4.8}
k_1(t)=\frac{a^2\theta (\xi )(t)}{2} ,
\end{align}
\begin{align}\label{4.9}
W_1=\frac{1}{a}\varphi \dot C ,
\end{align}
\begin{align}\label{4.10}
N_1=\frac{1}{a}\xi -\frac{1}{2a^2}\dot C .
\end{align}
Since $\widetilde g(\dot C,\dot C)=a^2\neq 0$, analogously as in the proof of Theorem \ref{Theorem 4.1} we parameterize $C(t)$ with respect to its arc length parameter $\widetilde s=\vert a\vert t$. Then it is easy to see that for the vector field $E_1(\widetilde s)=C^\prime (\widetilde s)$ the equality $\widetilde g(E_1,E_1)=1$ holds. Hence $C(\widetilde s)$ is a spacelike curve with respect to $\widetilde g$. By straightforward calculations we obtain
\begin{align*}
\begin{array}{ll}
\widetilde \eta (C^\prime )=\widetilde g(C^\prime ,\xi )=\displaystyle\eta (C^\prime )=\frac{1}{\vert a\vert }\eta (\dot C)=\frac{a}{\vert a\vert }=\epsilon , \\ \\
\displaystyle\widetilde g(C^\prime ,\varphi  C^\prime )=\frac{1}{a^2}\widetilde g(\dot C,\varphi \dot C)=\frac{1}{a^2}a^2=1 .
\end{array}
\end{align*}
From the above equalities it is clear that the spacelike curve $C(\widetilde s)$ is a $\varphi $-slant (non-Legendre) curve in $M$.\\
(i) By virtue of \eqref{2.4} we find
\begin{align}\label{4.11}
\widetilde \nabla _{C^\prime }C^\prime =\frac{1}{a^2}\widetilde \nabla _{\dot C}\dot C=
\frac{1}{a^2}\left(\nabla _{\dot C}\dot C-\frac{a^2\theta (\xi )}{2}\xi \right).
\end{align}
From the first equality in \eqref{3.17} and \eqref{4.9} we get
\begin{align*}
\nabla _{\dot C}\dot C=k_1W_1=k_1\frac{1}{a}\varphi \dot C .
\end{align*}
We substitute the latter equality in \eqref{4.11}. Then having in mind \eqref{4.8} we obtain
\begin{align*}
\widetilde \nabla _{C^\prime }C^\prime =\frac{k_1}{a^2}\left(\frac{1}{a}\varphi \dot C-\xi \right) .
\end{align*}
We rewrite the above equality in the following equivalent form
\begin{align}\label{4.12}
\widetilde \nabla _{C^\prime }C^\prime =\frac{\widetilde \epsilon k_1}{a^2}\widetilde \epsilon \left(\frac{1}{a}\varphi \dot C-\xi \right)=\frac{\widetilde \epsilon k_1}{a^2}E_2(\widetilde s),
\end{align}
where we put $E_2(\widetilde s)=\widetilde \epsilon \left(\frac{1}{a}\varphi \dot C-\xi \right)$ and 
$\widetilde \epsilon =\{{\rm sign} \, k_1(t)\}=\{{\rm sign}\, \theta (\xi )(t)\}=\{\pm 1\}$. By direct calculations we check that $\widetilde g(E_2,E_2)=1$ and $\widetilde g(E_1,E_2)=0$. From \eqref{4.10} 
we derive $\xi =aN_1+\frac{1}{2a}\dot C$ and hence $E_2=\widetilde \epsilon \left(-\frac{1}{2a}\dot C-aN_1+W_1\right)$. With the help of \eqref{4.12} and \eqref{4.8} we find
\begin{align*}
\widetilde k(\widetilde s)=\vert \widetilde \nabla _{C^\prime }C^\prime \vert =\sqrt{\displaystyle\vert \widetilde g(\widetilde \nabla _{C^\prime }C^\prime ,\widetilde \nabla _{C^\prime }C^\prime )\vert }=\frac{\vert k_1(t)\vert }{a^2}=\frac{\vert \theta (\xi )(t)\vert }{2}.
\end{align*}
Thus we establish the truth of the first equality in \eqref{4.6} and \eqref{4.12} becomes
\begin{align}\label{4.13}
\widetilde \nabla _{C^\prime }C^\prime =\widetilde kE_2 .
\end{align}
Since $(C(t), {\bf F_1})$ is non-geodesic, from Corollary \ref{Corollary 3.4} it follows that $\theta (\xi )\neq 0$ along $C$. Hence $C(\widetilde s)$ is also non-geodesic.\\
Now, we compute
\begin{align}\label{4.14}
\widetilde \nabla _{C^\prime }E_2=\widetilde \nabla _{C^\prime }\widetilde \epsilon\left(\frac{1}{a}\varphi \dot C-\xi \right)=\frac{1}{\vert a\vert }\left(\frac{\widetilde \epsilon}{a}\widetilde \nabla _{\dot C}\varphi \dot C-\widetilde \epsilon \widetilde \nabla _{\dot C}\xi \right).
\end{align}
Further, by using \eqref{2.4} we get 
\begin{align}\label{4.15}
\widetilde \nabla _{\dot C}\varphi \dot C=\nabla _{\dot C}\varphi \dot C+
\frac{a^2\theta (\xi )}{2}\xi .
\end{align}
From the well known formula $(\nabla _{\dot C}\varphi )\dot C=\nabla _{\dot C}\varphi \dot C-
\varphi (\nabla _{\dot C}\dot C)$ we express
\begin{align}\label{4.16}
\nabla _{\dot C}\varphi \dot C=(\nabla _{\dot C}\varphi )\dot C+\varphi (\nabla _{\dot C}\dot C) .
\end{align}
By virtue of \eqref{2.2} we find
\begin{align*}
(\nabla _{\dot C}\varphi )\dot C=-\frac{a\theta (\xi )}{2}(a\xi +\varphi ^2\dot C) .
\end{align*}
Taking into account \eqref{4.8} and \eqref{4.9} we have
\begin{align*}
\varphi (\nabla _{\dot C}\dot C)=\varphi (k_1W_1)=\frac{a^2\theta (\xi )}{2}\varphi ^2\dot C .
\end{align*}
Substituting the latter two equalities in \eqref{4.16} we obtain 
\begin{align}\label{4.17}
\nabla _{\dot C}\varphi \dot C=-\frac{a^2\theta (\xi )}{2}\xi .
\end{align}
From \eqref{4.15} and \eqref{4.17} it follows
\begin{align}\label{4.18}
\widetilde \nabla _{\dot C}\varphi \dot C=0 .
\end{align}
By using \eqref{2.4} and \eqref{2.3} we get
\begin{align}\label{4.19}
\widetilde \nabla _{\dot C}\xi =\nabla _{\dot C}\xi =\frac{\theta (\xi )}{2}\varphi \dot C .
\end{align}
Substituting \eqref{4.18} and \eqref{4.19} in \eqref{4.14} we receive
$\widetilde \nabla _{C^\prime }E_2=-\widetilde k\frac{1}{\vert a\vert }\varphi \dot C$. We rewrite the last equality in the following equivalent form 
\begin{align*}
\widetilde \nabla _{C^\prime }E_2=-\widetilde kE_1+\widetilde kE_1-\widetilde k\frac{1}{\vert a\vert }\varphi \dot C=-\widetilde kE_1-\widetilde k\left(\frac{1}{\vert a\vert }\varphi \dot C-E_1\right)
\end{align*}
and put $E_3(\widetilde s)=\frac{1}{\vert a\vert }\varphi \dot C-E_1=\frac{1}{\vert a\vert }(\varphi \dot C-\dot C)=\epsilon \left(-\frac{1}{a}\dot C+W_1\right)$. Immediately we verify that $\widetilde g(E_3,E_3)=-1$, $\widetilde g(E_1,E_3)=\widetilde g(E_2,E_3)=0$. Now, we obtain
\begin{align}\label{4.20}
\widetilde \nabla _{C^\prime }E_2=-\widetilde kE_1-\widetilde \tau E_3 .
\end{align}
where $\widetilde \tau =\widetilde k$. Finally, we have
\begin{align*}
\widetilde \nabla _{C^\prime }E_3=\frac{1}{\vert a\vert }\widetilde \nabla _{\dot C}\frac{1}{\vert a\vert }(\varphi \dot C-\dot C)=\frac{1}{a^2}\left(\widetilde \nabla _{\dot C}\varphi \dot C-\widetilde \nabla _{\dot C}\dot C\right).
\end{align*}
Taking into account \eqref{4.13} and \eqref{4.18}, we infer 
\begin{align}\label{4.21}
\widetilde \nabla _{C^\prime }E_3=-\widetilde \tau E_2 .
\end{align}
The equalities \eqref{4.13}, \eqref{4.20} and \eqref{4.21} show that $C(\widetilde s)$ is a Frenet curve of osculating order 3. Note that in our case $\epsilon _1=\epsilon _2=-\epsilon _3=1$.\\
(ii) The truth of the assertion follows from Corollary \ref{Corollary 3.4} and \eqref{4.6}.
\end{proof}
As an immediate consequence from Proposition \ref{Proposition 3.7} and Theorem \ref{Theorem 4.2} we obtain
\begin{cor}
Let $(C(t), {\bf F_1})$ and $C(\widetilde s)$ be the curves from Theorem \ref{Theorem 4.2}. Then
\par
(i) $C(\widetilde s)$ is a generalized helix.
\par
(ii) If $(C(t), {\bf F_1})$ is a $\varphi $-geodesic, then $C(\widetilde s)$ is a proper helix.
\end{cor}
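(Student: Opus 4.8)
The plan is to extract everything directly from \thmref{Theorem 4.2}. Recall that there we proved $C(\widetilde s)$ is a Frenet curve of osculating order $3$ (in the non-geodesic case of that theorem), with curvature and torsion tied together by \eqref{4.6}, namely $\widetilde k(\widetilde s)=\frac{\vert\theta(\xi)(t)\vert}{2}$ and $\widetilde\tau(\widetilde s)=\widetilde k(\widetilde s)$. In particular the quotient $\widetilde k/\widetilde\tau$ is identically $1$, hence constant. Since $\theta(\xi)$ is not a constant function along $C$ in general, neither $\widetilde k$ nor $\widetilde\tau$ is constant, so by the definition of a generalized helix recalled in this section, $C(\widetilde s)$ is a generalized helix. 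This settles (i).

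For (ii), I would invoke \propref{Proposition 3.7}: since $(C(t),{\bf F_1})$ is a $\varphi$-geodesic, it gives $b=0$ (consistent with the hypothesis of \thmref{Theorem 4.2}) and $\theta(\xi)=\frac{a}{2}$. Here $a\neq 0$ by \propref{Proposition 3.2}, so $\theta(\xi)=\frac{a}{2}\neq 0$; by \corref{Corollary 3.4} the curve is then non-geodesic, hence \thmref{Theorem 4.2}(i) does apply and $C(\widetilde s)$ has osculating order $3$. Substituting $\theta(\xi)=\frac{a}{2}$ into \eqref{4.6} yields $\widetilde k(\widetilde s)=\widetilde\tau(\widetilde s)=\frac{\vert a\vert}{4}$, a nonzero constant. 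Thus $C(\widetilde s)$ is a helix, and since its torsion is nonzero it is not a pseudo-circle; therefore it is a proper helix.

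The whole argument is a short bookkeeping step once \thmref{Theorem 4.2} and \propref{Proposition 3.7} are in hand; the only points to watch are (a) that one stays in the non-geodesic case of \thmref{Theorem 4.2}, so that the torsion $\widetilde\tau$ is genuinely defined, and (b) that $a\neq 0$ in (ii), which guarantees $\widetilde k,\widetilde\tau\neq 0$ and so places $C(\widetilde s)$ among the proper helices rather than the pseudo-circles.
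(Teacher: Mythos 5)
Your proposal is correct and is exactly the argument the paper intends: the corollary is stated there as an immediate consequence of Proposition \ref{Proposition 3.7} and Theorem \ref{Theorem 4.2}, and you fill in precisely those steps, including the useful observations that $a\neq 0$ forces $\theta(\xi)=\frac{a}{2}\neq 0$ (so the non-geodesic case of Theorem \ref{Theorem 4.2} applies) and that $\widetilde\tau=\frac{\vert a\vert}{4}\neq 0$ rules out a pseudo-circle. No changes needed.
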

\section{Null $\varphi $-slant curves in a Lie group as a 3-dimensional $\F_4$-manifold and their matrix representation}
Let $G$ be a 3-dimensional real connected Lie group and $\mathfrak {g}$ be its Lie algebra with a basis 
$\{E_1, E_2, E_3\}$ of left invariant vector fields. We define an almost contact structure $(\varphi, \xi, \eta )$ and a left invariant B-metric $g$ as follows:
\begin{align*}\label{}
\begin{array}{llll}
\varphi E_1=E_2, \, \,  \varphi E_2=-E_1, \, \, \varphi E_3=0, \quad \xi =E_3, \, \, \eta (E_3)=1, 
\eta (E_1)=\eta (E_2)=0,\\
g(E_1,E_1)=-g(E_2,E_2)=g(E_3,E_3)=1, \quad
g(E_i,E_j)=0, \, i\neq j \in\{1,2,3\}.
\end{array}
\end{align*}
Let $(G,\varphi,\xi,\eta,g)$ be a 3-dimensional almost contact B-metric manifold  such that the Lie algebra $\mathfrak {g}$ of $G$ is determined by the following commutators:
\begin{align}\label{5.1}
[E_1,E_3]=\alpha E_2 \quad [E_2,E_3]=-\alpha E_1, \quad [E_1,E_2]=0, \,\, \alpha \in \R, \, \, \alpha \neq 0. 
\end{align}
Further we will show that if \eqref{5.1} holds, then $(G,\varphi,\xi,\eta,g)$ is an $\F_4$-manifold. By using the Koszul formula
\begin{align}\label{5.2}
2g(\nabla _{E_i}E_j,E_k)=g([E_i,E_j],E_k)+g([E_k,E_i],E_j)+g([E_k,E_j],E_i)
\end{align}
we obtain the following equality for the components $F_{ijk}=F(E_i,E_j,E_k)$,\\
$i,j,k \in \{1,2,3\}$ of the tensor $F$:
\begin{align}\label{5.3}
\begin{array}{ll}
2F_{ijk}=g([E_i,\varphi E_j]-\varphi [E_i,E_j],E_k)+g(\varphi [E_k,E_i]-[\varphi E_k,E_i],E_j)\\
\quad\quad\,\,\,+g([E_k,\varphi E_j]-[\varphi E_k,E_j],E_i).
\end{array}
\end{align}
By virtue of \eqref{5.1} and \eqref{5.3} we obtain that the non-zero components $F_{ijk}$ are
\begin{align}\label{5.4}
F_{113}=F_{131}=\alpha , \quad F_{223}=F_{232}=-\alpha .
\end{align}
For the tensor $F$ of a 3-dimensional $\F_4$-manifold, using \eqref{2.2}, we get 
\begin{align*}
F(X,Y,Z)=\frac{\theta (\xi )}{2}\{(X^1Y^1-X^2Y^2)Z^3+(X^1Z^1-X^2Z^2)Y^3\},
\end{align*}
where $X=X^iE_i$, $Y=Y^iE_i$, $Z=Z^iE_i$ are arbitrary vector fields. The latter equality and \eqref{5.4} imply that $(G,\varphi,\xi,\eta,g)$ is a 3-dimensional $\F_4$-manifold and $\alpha =\frac{\theta (\xi )}{2}$. With the help of \eqref{5.1} and \eqref{5.2} we find the components of the Levi-Civita connection $\nabla $. The non-zero ones of them are
\begin{align}\label{5.5}
\nabla _{E_1}E_2=\nabla _{E_2}E_1=\alpha \xi , \quad \nabla _{E_1}\xi =\alpha E_2, \quad 
\nabla _{E_2}\xi =-\alpha E_1.
\end{align}
Consider the curve $C(t)=e^{tX}$ on $G$, where
$t\in {\R}$ and $X\in \mathfrak {g}$. Hence the tangent vector to $C(t)$ at the identity element $e$ of 
$G$ is $\dot {C}(0)=X$. Let the coordinates $(p,q,r)$ of $\dot C$ with respect to the basis $\{E_1, E_2, E_3\}$ are given by
\begin{align}\label{5.6}
p=-\epsilon \sqrt{\frac{\sqrt{a^4+b^2}-a^2}{2}}, \quad q=\sqrt{\frac{\sqrt{a^4+b^2}+a^2}{2}},
\quad r=a,
\end{align}
where $a, b\in \R$, $(a,b)\neq (0,0)$ and $\epsilon =\{{\rm sign}\, b\}=\{\pm 1\}$. It is easy to see that
$g(\dot {C},\dot {C})=0$ and $\eta (\dot C)=a$. Also, having in mind that $\varphi \dot C=(-q,p,0)$,
we have $g(\dot C,\varphi \dot C)=b$. Hence, $C(t)$ is a $\varphi $-slant null curve in $(G,\varphi,\xi,\eta,g)$. Furthermore, using \eqref{5.5}, one obtains
\begin{align}\label{5.7}
\begin{array}{lll}
\nabla _{\dot C}{\dot C}=\alpha (-aqE_1+apE_2-b\xi )\\
\qquad \, \, \, =\displaystyle\alpha \sqrt{a^4+b^2}\left(\frac{-aq}{\sqrt{a^4+b^2}}E_1+\frac{ap}{\sqrt{a^4+b^2}}E_2-\frac{b}{\sqrt{a^4+b^2}}E_3\right)\\
\qquad \, \, \, =\alpha \sqrt{a^4+b^2}W_1,
\end{array}
\end{align}
where the vector field
\begin{align}\label{W_1}
W_1=\left(\frac{-aq}{\sqrt{a^4+b^2}},\frac{ap}{\sqrt{a^4+b^2}},-\frac{b}{\sqrt{a^4+b^2}}\right)
\end{align}
is a spacelike unit. 
Then the unique $N_1$ corresponding to $W_1$  is given by
\begin{align}\label{N_1}
N_1=\left(-\frac{a^2p+2bq}{2(a^4+b^2)},\frac{-a^2q+2bp}{2(a^4+b^2)},\frac{a^3}{2(a^4+b^2)}\right).
\end{align}
Thus, by using \eqref{5.5}, we obtain
\begin{align}\label{5.8}
\begin{array}{ll}
\nabla_{\dot C}N_1=\displaystyle\frac{\alpha a^2}{2\sqrt{a^4+b^2}}W_1, \\
\nabla_{\dot C}W_1=-\displaystyle\frac{\alpha a^2}{2\sqrt{a^4+b^2}}\dot C-
\alpha \sqrt{a^4+b^2}N_1 .
\end{array}
\end{align}
Comparing the equations \eqref{5.7} and \eqref{5.8} with \eqref{general Frenet eq} we get
\begin{align*}
h=0 , \qquad k_1=\alpha \sqrt{a^4+b^2}, \qquad k_2=\displaystyle\frac{\alpha a^2}
{2\sqrt{a^4+b^2}} 
\end{align*}
with respect to to the Frenet frame ${\bf F_1}=\{\dot C, N_1, W_1\}$.\\
Further, we find the matrix representation of $C(t)$ and ${\bf F}_1$. 
Let us recall that the adjoint representation $\rm Ad$ of $G$ is the following Lie group homomorphism
\[
\rm {Ad} : G \longrightarrow Aut({\g}).
\]
For $X\in {\g}$, the map ${\rm {ad}}_X : {\g}\longrightarrow {\g}$ is defined by ${\rm {ad}}_X(Y)=[X,Y]$, where by ${\rm ad}_X$ is denoted  ${\rm {ad}}(X)$. Due to the Jacobi identity, the map
\[
\rm {ad} : {\g} \longrightarrow End({\g}) : X\longrightarrow ad_X
\]
is Lie algebra homomorphism, which is called  adjoint representation of ${\g}$.
Since the set ${\rm End}({\g})$ of all ${\K}$-linear maps from ${\g}$ to ${\g}$ is isomorphic to the set of all  $(n\times n)$ matrices ${\rm M}(n,{\K})$ with entries in ${\K}$, $\rm {ad}$ is a matrix representation of ${\g}$. We denote by $M_i$ the matrices of ${\rm ad}_{E_i}$ (i=1,2,3) with respect to the basis $\{E_1,E_2,E_3\}$ of ${\g}$.  Then for an arbitrary $X=x_1E_1+x_2E_2+x_3E_3$ ($x_1, x_2, x_3 \in {\R}$) in ${\g}$ the matrix $A$ of ${\rm ad}_X$ is $A=x_1M_1+x_2M_2+x_3M_3$.
Then by virtue of the well known identity $e^A={\rm {Ad}}\left(e^X\right)$ we find the matrix representation of the Lie group $G$. By using \eqref{5.1} we obtain $M_1$, $M_2$, $M_3$ and then $A$
\[
M_1=\left(\begin{array}{lll}
0 & 0 & 0 \cr
0 & 0 & \alpha \cr
0 & 0 & 0
\end{array}\right), \quad
M_2=\left(\begin{array}{lcr}
0 & 0 & -\alpha \cr
0 & 0 & 0 \cr
0 & 0 & 0
\end{array}\right), \quad
M_3=\left(\begin{array}{rll}
0 & \alpha & 0 \cr
-\alpha & 0  & 0 \cr
0 & 0 & 0
\end{array}\right),
\]
\begin{align}\label{5.9}
A=\left(\begin{array}{ccc}
0 & x_3\alpha & -x_2\alpha \cr
-x_3\alpha & 0  & x_1\alpha  \cr
0 & 0 & 0
\end{array}\right).
\end{align}
The characteristic polynomial of A is
\[
P_A(\lambda )=-\lambda(\lambda ^2+x_3^2\alpha ^2) =0 .
\]
Hence for the eigenvalues $\lambda _i \, (i = 1, 2, 3)$ of $A$ we have
\[
\lambda _1=0 , \quad \lambda _2=ix_3\alpha  , \quad \lambda _3=-ix_3\alpha , \quad i^2=-1.
\]
By the assumption that $x_3\neq 0$, the eigenvectors
\[
p_1=(x_1,x_2,x_3), \quad p_2=(1,i,0),\quad p_3=(i,1,0)
\]
corresponding to $\lambda _1, \lambda _2, \lambda _3$, respectively, are linearly independent for arbitrary $x_1$, $x_2$ and $x_3\neq 0$. For the change of basis matrix P and its inverse matrix 
$P^{-1}$ we get
\[
P=\left(\begin{array}{rll}
x_1 & 1 & i \cr
x_2 & i & 1 \cr
x_3 & 0 & 0
\end{array}\right) , \quad 
P^{-1}=\frac{1}{2x_3}\left(\begin{array}{ccc}
0 & 0 & 2 \cr
x_3 & -ix_3 & -x_1+ix_2 \cr
-ix_3 & x_3 & -x_2+ix_1
\end{array}\right) .
\]
By using that $e^A = Pe^JP^{-1}$, where $J$ is the diagonal matrix with elements 
$J_{ii} =\lambda _i$, we obtain the matrix representation of the Lie group $G$ in case  $x_3\neq 0$
\begin{align}\label{5.10}
\small G=\left\{e^A=
\left(\begin{array}{ccc}
\cos \alpha x_3 & \sin \alpha x_3  & \frac{x_1}{x_3}(1-\cos \alpha x_3)-\frac{x_2}{x_3}\sin \alpha x_3  \cr \cr
-\sin \alpha x_3 & \cos \alpha x_3  & \frac{x_2}{x_3}(1-\cos \alpha x_3)+\frac{x_1}{x_3}\sin \alpha x_3 \cr \cr
0 & 0 & 1
\end{array}\right)
\right\} .
\end{align}
The coordinates of the vector field $t\dot C\in {\g}$, $t\in \R$, are $(tp,tq,ta)$, where $p,q$ are given by \eqref{5.6} and $a\neq 0$. Since ${\rm Ad}(C(t))={\rm Ad}\left(e^{t\dot c}\right)$,
we find ${\rm Ad}(C(t))$ replacing $x_1$, $x_2$ and $x_3$ in \eqref{5.10} with
$tp$, $tq$ and $ta$, respectively. Thus, for the matrix representation of a $\varphi $-slant null curve 
$C(t)$, which is not a Legendre curve, we have
\begin{align*}
{\rm Ad}(C(t))=
\left(\begin{array}{ccc}
\cos \alpha at & \sin \alpha at  & \frac{p}{a}(1-\cos \alpha at)-\frac{q}{a}\sin \alpha at  \cr \cr
-\sin \alpha at & \cos \alpha at  & \frac{q}{a}(1-\cos \alpha at)+\frac{p}{a}\sin \alpha at \cr \cr
0 & 0 & 1
\end{array}\right) .
\end{align*}
Finally, we may obtain the  matrix representations of $\dot C$, $W_1$ and $N_1$ replacing $x_1$, $x_2$ and $x_3$ in \eqref{5.9} with their coordinates, determined by   \eqref{5.6},  \eqref{W_1} and  \eqref{N_1}, respectively.

\end{document}